   \providecommand\@dotsep{5}
\newtheorem{theorem}{Theorem}[section]
\newtheorem{lemma}[theorem]{Lemma}
\newtheorem{corollary}[theorem]{Corollary}
\newtheorem{proposition}[theorem]{Proposition}
\newtheorem{question}[theorem]{Question}
\theoremstyle{definition}
\newtheorem{defn}[theorem]{Definition}
\newtheorem{remark}[theorem]{Remark}
\newtheorem{example}[theorem]{Example}
\def\E{\mathbb{E}}
\def\Z{\mathbb{Z}}
\def\R{\mathbb{R}}
\def\T{\mathbb{T}}
\def\N{\mathbb{N}}
\def\P{\mathbb{P}}
\def\Q{\mathbb{Q}}
\newcommand{\ud}{\,\mathrm{d}}
\DeclareMathOperator{\cB}{\mathcal{B}}
\DeclareMathOperator{\cP}{\mathcal{P}}
\DeclareMathOperator{\cQ}{\mathcal{Q}}
\DeclareMathOperator{\cX}{\mathcal{X}}
\newcommand{\Zmod}[1]{\Z_{#1}} 
\let\originalleft\left
\let\originalright\right
\renewcommand{\left}{\mathopen{}\mathclose\bgroup\originalleft}
\renewcommand{\right}{\aftergroup\egroup\originalright}
\renewcommand{\subset}{\subseteq}
\renewcommand{\supset}{\supseteq}
\begin{document}

\title[Towers for commuting endomorphisms]{Towers for commuting endomorphisms, and combinatorial applications}

\author{Artur Avila}
\address{CNRS, IMJ-PRG, UMR 7586, Univ Paris Diderot, Sorbonne Paris Cit\'e\newline
\indent Sorbonnes Universit\'es, UPMC  Univ  Paris  06\newline
\indent F-75013,  Paris,  France\newline
\indent \&\newline
\indent  IMPA,  Estrada  Dona  Castorina  110,  Rio  de Janeiro, Brasil
}

\author{Pablo Candela}
\address{Alfr\'ed R\'enyi Institute of Mathematics\newline
	\indent 13-15 Re\'altanoda utca\newline
	\indent 1056 Budapest, Hungary}

\date{}
\subjclass[2010]{Primary 28D05, 37A05; Secondary 05D99, 11B30}
\keywords{Rokhlin's lemma, commuting endomorphisms, linear equations}
\maketitle

\begin{abstract}
We give an elementary proof of a generalization of Rokhlin's lemma for commuting non-invertible measure-preserving transformations, and we present several  combinatorial applications.
\end{abstract}

\section{Introduction}
\noindent Throughout this paper we denote by $(X,\cX,\mu)$ a standard probability space, and $T$ denotes an \emph{endomorphism} on $X$, that is, a measure-preserving transformation $X\to X$. By an $n$\emph{-tower} (or tower of height $n$) for $T$ we mean a sequence $B,\;T^{-1} B,\ldots,\;T^{-(n-1)}B$ of pairwise-disjoint successive preimages of some measurable set $B\subset X$. By the \emph{measure} of such a tower we mean simply $\mu\Big(\bigcup_{j=0}^{n-1} T^{-j}B \Big)$.\\
\indent Towers play an important role in proofs of several central results in ergodic theory, especially by providing ways to approximate a given endomorphism by a periodic one. Originally these methods focused on invertible transformations (automorphisms). The main tool powering these methods is the following well-known result, which was stated explicitly for the first time\footnote{A simple proof can be given for an ergodic map $T$ using the so-called skyscrapers of Kakutani (see \cite{Petersen}) and his name is also often associated with the result.} by Rokhlin \cite{Rok1}, and which concerns any automorphism $T$ on $X$ that is \emph{aperiodic}, meaning that we have $\mu(\{x\in X:T^n x=x\})=0$ for every positive integer $n$.

\begin{theorem}[Rokhlin]\label{thm:Rokh}
Let $\epsilon > 0$ and let $n$ be a positive integer. Then for every aperiodic automorphism $T$ on an atomless standard probability space, there exists an $n$-tower for $T$ of measure at least $1-\epsilon$.
\end{theorem}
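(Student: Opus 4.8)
The plan is to treat the ergodic case by the classical Kakutani skyscraper construction, and then to reduce the general aperiodic case to it via the ergodic decomposition. (I note in passing that the atomlessness hypothesis is automatic: if $\{a\}$ were an atom, then, $T$ being invertible and measure-preserving, every point of the orbit of $a$ would be an atom of the same positive mass, so the orbit would be finite and $a$ periodic, contradicting $\mu(\{x:T^px=x\})=0$.)

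First I would assume $T$ ergodic. Fix a measurable $Y\subset X$ with $0<\mu(Y)<\epsilon/n$, which exists since $\mu$ is atomless. By the Poincar\'e recurrence theorem the first-return time $\rho(y)=\min\{k\ge 1:T^ky\in Y\}$ is finite for almost every $y\in Y$; setting $Y_h=\{y\in Y:\rho(y)=h\}$, the columns $C_h=\bigcup_{j=0}^{h-1}T^jY_h$ are pairwise disjoint, and their union coincides, modulo null sets, with $\bigcup_{k\ge 0}T^kY$, which is invariant modulo null sets (as $T$ preserves $\mu$) and contains $Y$, hence has full measure by ergodicity. I would then take as base of the tower the set obtained by slicing each column into blocks of height $n$ from the bottom up, namely $B=\bigcup_{h}\bigcup_{s=0}^{\lfloor h/n\rfloor-1}T^{sn}Y_h$. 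A direct check shows that $B,TB,\ldots,T^{n-1}B$ occupy pairwise distinct levels within every column, hence are pairwise disjoint, and that within $C_h$ they cover all but the top $h-n\lfloor h/n\rfloor<n$ levels (a column with $h<n$ being missed entirely); summing over $h$, the uncovered part has measure at most $n\sum_h\mu(Y_h)=n\mu(Y)<\epsilon$. Finally, since $T$ is invertible, the sequence $T^{n-1}B,T^{-1}(T^{n-1}B),\ldots,T^{-(n-1)}(T^{n-1}B)$ is the same as $T^{n-1}B,T^{n-2}B,\ldots,B$, so $T^{n-1}B$ is the base of an $n$-tower for $T$ in the sense of successive preimages, of measure at least $1-\epsilon$ (equivalently, one runs the whole argument with $T^{-1}$ in place of $T$).

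For a general aperiodic automorphism $T$ I would invoke the ergodic decomposition $\mu=\int_{\Omega}\mu_\omega\ud\nu(\omega)$. For each fixed $p$ the relation $0=\mu(\{T^px=x\})=\int_{\Omega}\mu_\omega(\{T^px=x\})\ud\nu(\omega)$ forces $\mu_\omega(\{T^px=x\})=0$ for $\nu$-almost every $\omega$, so for $\nu$-almost every $\omega$ the ergodic system $(X,\mu_\omega,T)$ is aperiodic, and hence atomless by the remark above. I would then choose a single $Y$ with $\mu_\omega(Y)=\epsilon/n$ for $\nu$-almost every $\omega$, by selecting in each component a subset of relative $\mu_\omega$-measure $\epsilon/n$ (possible since $\mu_\omega$ is atomless); note that then $\mu(Y)=\epsilon/n$ as well. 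Running the skyscraper construction of the previous paragraph over this globally defined $Y$ produces in each component a base whose recipe depends only on the globally measurable first-return time function to $Y$, so the componentwise bases patch together into one measurable set $B$; integrating the componentwise bound $\mu_\omega(\bigcup_{j=0}^{n-1}T^jB)\ge 1-n\mu_\omega(Y)=1-\epsilon$ against $\nu$ then yields the required estimate for $\mu$. Converting to the preimage convention as before completes the proof.

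The step I expect to be the main obstacle is the reduction from the ergodic to the general aperiodic case: one must ensure that the base $Y$ has positive mass in almost every ergodic component (this is precisely why $Y$ is chosen componentwise rather than arbitrarily), and one must check that the column decomposition, and the resulting base, are genuinely measurable on all of $X$ and not merely within each fixed component. Should one wish to avoid the ergodic decomposition, this last paragraph can be replaced by the more hands-on argument of Halmos, which constructs a tall tower directly, from aperiodicity alone, by a maximality and exhaustion argument, and then slices it into blocks of height $n$ exactly as above.
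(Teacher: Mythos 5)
The paper does not actually prove Theorem~\ref{thm:Rokh}: it is quoted as Rokhlin's classical result, with only a footnote remarking that the ergodic case follows from Kakutani skyscrapers. (It is also subsumed by the paper's main Theorem~\ref{thm:Rokhlin} with $d=1$: for an invertible $T$, freeness of the $\N_0$-action generated by $T$ is exactly aperiodicity, and the Section~\ref{sec:Rokhlin} proof is a completely different, elementary variational argument -- random sets refined along a separating sequence of partitions, then a supremum-contradiction -- that uses neither skyscrapers nor ergodic decomposition.) Your route is the classical one, and your ergodic case is correct and complete: the column decomposition, the disjointness of $B,TB,\dots,T^{n-1}B$, the bound $n\mu(Y)<\epsilon$ on the uncovered part, and the passage from forward images to preimages via $B'=T^{n-1}B$ all check out. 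What the two approaches buy is clear: yours is short and transparent when $T$ is ergodic; the paper's argument pays a constant-factor price in intermediate bounds but works verbatim for non-invertible commuting maps and never needs to isolate ergodic components.

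The one genuine gap is the one you flag yourself: the reduction of the general aperiodic case to the ergodic case. As written it is a plan, not a proof. Producing a single measurable $Y$ with $\mu_\omega(Y)=\epsilon/n$ for $\nu$-a.e.\ $\omega$ requires a measurable disintegration together with a measurable selection (e.g.\ a measurable $\phi:X\to[0,1]$ whose conditional law given each component is uniform, taking $Y=\phi^{-1}([0,\epsilon/n])$), and one must also verify that a.e.\ component is atomless and that the null sets discarded componentwise (non-returning points, the complement of the skyscraper in each component) assemble into a global null set. None of this is false, but it is precisely where the technical content of the theorem lives, and on a general standard space it is not lighter machinery than the alternatives. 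The two clean ways to close the gap are the ones you name or that the paper supplies: either Halmos's direct exhaustion argument, which builds a tall tower from aperiodicity alone and is then sliced into blocks of height $n$ exactly as in your ergodic case, or simply the case $d=1$ of Theorem~\ref{thm:Rokhlin}, whose proof in Section~\ref{sec:Rokhlin} avoids both the ergodic decomposition and Zorn's lemma.
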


\noindent The literature related to this very useful result is rich\footnote{The result is often referred to as Rokhlin's lemma, but in light of its importance it can also be stated as a theorem; see \cite{K&W}.}; we refer the reader to \cite{Korn,Weiss} for more detailed expositions. In particular, the result has been generalized in several directions.\\
\indent In one central direction, the $\Z$-action generated by $T$ is replaced with other group actions. Let us mention here the generalization to $\Z^d$-actions proved by Conze \cite{Conze} and independently by Katznelson and Weiss \cite{K&W}, and let us refer again to \cite{Korn,Weiss} for information on other such extensions.\\
\indent Another direction, less covered in the literature, concerns non-invertible maps. This starts with the version of Theorem \ref{thm:Rokh} in which $T$ is just an endomorphism. Up to the early 2000s, this version was part of the folklore (some explicit mentions of the result outline ways to prove it by modifying some of the existing proofs of the versions for automorphisms; see for instance \cite{Korn}). The first publication containing a full proof of a version for endomorphisms seems to be \cite{H&S}.\\ 
\indent It is natural to ask then for an analogue, for non-invertible maps, of the extension of Theorem \ref{thm:Rokh} to $\Z^d$-actions. This analogue is also motivated by some applications that we describe below. The main result in this paper provides such an analogue, with an elementary proof.\\
\indent The statement of the result uses the following terminology. Let $\N,\N_0$ denote the set of positive integers and non-negative integers respectively. We consider a \emph{measure-preserving action of} $\N_0^d$ on $X$, that is a map $f: \N_0^d\times X\to X$ such that for each $n=\big(n(1),\dots,n(d)\big)\in \N_0^d$ the map $f_n:X\to X$, $x\mapsto f(n,x)$ is an endomorphism on $X$, with $f_0$ being the identity map, and such that for every $m,n\in \N_0^d$ and $x\in X$ we have $f_{m+n}(x)=f_m (f_n(x))$. Equivalently, $f(n,x)=T_1^{n(1)}\circ T_2^{n(2)}\circ\cdots\circ T_d^{n(d)}(x)$ where $T_1,\ldots,T_d$ are commuting endomorphisms on $X$.\\
\indent We say that the action $f$ is \emph{free} if for every distinct $k,\ell\in \N_0^d$ we have
\begin{equation}\label{eq:freecond}
\mu(\{x\in X: f_k(x)=f_\ell(x)\})=0.
\end{equation}
\noindent For $k,\ell \in \N_0^d$, we write $k<\ell$ (respectively $k\leq \ell$) if for every $j\in [d]=\{1,2,\ldots,d\}$ we have $k(j)<\ell(j)$ (resp. $k(j)\leq \ell(j)$). For $n \in \N^d$ and $B \in \cX$, we denote by $B_{(n)}$ the union of preimages $\bigcup_{0\leq k<n} f_k^{-1}(B)$. If these preimages are pairwise disjoint we say that $B_{(n)}$ is an $n$\emph{-tower} for $f$ with \emph{base} $B$.

Our main result is the following multiparameter version of Theorem \ref{thm:Rokh} for non-invertible maps.

\begin{theorem}\label{thm:Rokhlin}
Let $\epsilon>0$ and let $n\in \N^d$. Then for every free measure-preserving action $f$ of $\N_0^d$ on an atomless standard probability space, there exists an $n$-tower for $f$ of measure at least $1-\epsilon$.
\end{theorem}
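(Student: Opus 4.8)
We argue by induction on $d$, reducing at the outset to $X=[0,1]$ with Lebesgue measure (and, if helpful, to the case of an ergodic action, by the ergodic decomposition together with a measurable selection, since freeness passes to almost every ergodic component). For $d=1$ the action is generated by a single endomorphism $T$, which is aperiodic by \eqref{eq:freecond} (take $\ell=0$). One first records the classical auxiliary fact that for each $\delta>0$ there is a set $A$ with $0<\mu(A)<\delta$ and $\mu\big(X\setminus\bigcup_{j\ge0}T^{-j}A\big)=0$ (immediate for ergodic $T$; in general via the ergodic decomposition, or one may simply invoke the one‑dimensional endomorphism case of \cite{H&S}). Setting $\rho(x)=\min\{j\ge0:T^jx\in A\}$, which is finite a.e., one checks the identity $\rho(T^jx)=\rho(x)-j$ whenever $\rho(x)\ge j$. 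Take as base $B=\{x:\rho(x)\ge n_1\ \text{and}\ \rho(x)\equiv 0\bmod n_1\}$. The identity then shows that $B,T^{-1}B,\dots,T^{-(n_1-1)}B$ are pairwise disjoint (a common point of $T^{-i}B$ and $T^{-j}B$ would force $n_1\mid(j-i)$ for some $0<j-i<n_1$) and that their union contains $\{\rho\ge n_1\}$, which has measure at least $1-n_1\mu(A)>1-\epsilon$ once $A$ is small enough.

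For the inductive step assume the theorem for free $\N_0^{d-1}$-actions. Given commuting endomorphisms $T_1,\dots,T_d$ generating a free $\N_0^d$-action $f$, apply the one‑dimensional case to $T_1$ to obtain a tall tower: a set $C\in\cX$ with $C,T_1^{-1}C,\dots,T_1^{-(M_1-1)}C$ pairwise disjoint, where $n_1\mid M_1$ and $M_1$ is large, and with union $\mathcal C$ of measure at least $1-\delta$. Each point of $\mathcal C$ has a unique level in $\{0,\dots,M_1-1\}$; let $p:\mathcal C\to C$ send a point to its (level)-th $T_1$-image, which is exactly measure-preserving from $(\mathcal C,\mu)$ onto $(C,M_1\mu|_C)$, and observe that $T_1$ acts on $\mathcal C$ as ``lower the level by one, wrapping at the bottom'' off a set of measure $O(\delta)$. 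Define on $C$ the induced transformations $\bar T_2,\dots,\bar T_d$ by $\bar T_j=p\circ T_j$ (extended by the identity on the measure-$O(\delta)$ set where $T_j$ leaves $\mathcal C$). These commute, and unfolding an identity $\bar f_k(x)=\bar f_\ell(x)$ yields an identity $f_m(x)=f_{m'}(x)$ with $m\ne m'\in\N_0^d$ whose first coordinates range over a fixed finite set; hence freeness of $f$ forces freeness of the $\N_0^{d-1}$-action $\bar f=\langle\bar T_2,\dots,\bar T_d\rangle$. Granting that $\bar f$ is measure-preserving, apply the inductive hypothesis on $(C,\mu|_C/\mu(C))$ to obtain an $(n_2,\dots,n_d)$-tower for $\bar f$ with base $D\subseteq C$ of relative measure at least $1-\delta'$.

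Take $B=D$ and verify that $\{f_k^{-1}(B):0\le k<n\}$ is an $n$-tower of measure $\ge 1-\epsilon$. For indices $k,k'$ agreeing in coordinates $2,\dots,d$, disjointness reduces, after peeling off the common factor $T_2^{-k_2}\cdots T_d^{-k_d}$, to disjointness of the tower levels $T_1^{-a}C$ for $a<n_1\le M_1$. For $k,k'$ differing in some coordinate $\ge 2$, one traces a putative coincidence $f_k(x),f_{k'}(x)\in D$, via the relation $\bar T_j=p\circ T_j$ and the level structure, to a coincidence for the $\bar f$-tower at the corresponding reduced indices, contradicting its disjointness; here the ``wrapping'' of $T_1$ must be tracked with care. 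Finally a Fubini-type estimate across the $M_1$ levels bounds the measure of $X\setminus\bigcup_{0\le k<n}f_k^{-1}(B)$ by $O(\delta+\delta'+1/M_1)$, which is $<\epsilon$ for suitable $\delta,\delta',M_1$.

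The step I expect to be the real obstacle is making the induced action honest. Because the $T_j$ are not invertible there is no first-return map available, and $\bar T_j=p\circ T_j$ is only approximately measure-preserving, with defect of order $1/M_1+\delta$ rather than exactly zero; upgrading $\bar f$ to a genuine free measure-preserving $\N_0^{d-1}$-action — whether by correcting $\bar T_j$ on a small set, by passing to an exactly invariant measure, or by carrying an explicit error term through the induction — together with the accompanying bookkeeping of the wrap-around in the disjointness verification, is where the substance of the argument lies; everything else is the Kakutani–skyscraper idea (run in the preimage direction so as to accommodate endomorphisms) plus routine measure estimates. A tempting shortcut — passing to the common natural extension to render the $T_j$ invertible and then invoking the $\Z^d$ Rokhlin lemma of Conze and Katznelson–Weiss — does not help, since it would require that lemma with the tower base confined to the sub-$\sigma$-algebra $\pi^{-1}(\cX)$, which is precisely equivalent to the endomorphism statement we are trying to prove.
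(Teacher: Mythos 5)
Your proposal is a sketch with an essential gap at exactly the point you flag yourself: the construction of the induced $\N_0^{d-1}$-action on the base of the $T_1$-tower. For invertible maps this step is powered by the first-return (Kakutani) machinery, which guarantees that the induced maps are genuinely measure-preserving; for endomorphisms no such machinery exists, and your substitute $\bar T_j=p\circ T_j$ fails on all three counts that the inductive hypothesis requires. It need not preserve $\mu|_C$ (the map $p$ preserves measure from $(\mathcal C,\mu)$ to $(C,M_1\mu|_C)$ globally, but $T_j$ restricted to $C$ is not measure-preserving onto its image, so the composite has no a priori invariance); the maps $\bar T_i,\bar T_j$ commute only up to the wrap-around and exit sets, which have measure $O(\delta+1/M_1)$ but are not empty; and freeness of $\bar f$ does not follow cleanly, since the level function $\rho$ enters the exponent of $T_1$ in a point-dependent way, so an identity $\bar f_k(x)=\bar f_\ell(x)$ unfolds to $f_m(x)=f_{m'}(x)$ with $m,m'$ depending on $x$, and one must rule out $m=m'$. "Correcting $\bar T_j$ on a small set" or "passing to an exactly invariant measure" is not a routine repair: any correction changes the unfolding identities used in the disjointness verification, and carrying an $O(\delta+1/M_1)$ defect through a $(d-1)$-fold induction requires quantitative control that you have not supplied. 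This is precisely the adaptation of the Conze / Katznelson--Weiss arguments that the authors state they were unable to carry out ("We did not find a simple modification of this proof\ldots"), so the burden of making it work is the whole theorem, not a technicality. (Your opening appeal to the ergodic decomposition plus measurable selection is also heavier machinery than the paper permits itself, though that is a stylistic rather than a logical objection.)

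The paper's actual proof is not an induction on $d$ and uses no induced transformations. Its engine is Proposition \ref{prop:2-tower}: for two commuting endomorphisms $f,g$ agreeing only on a null set, a random two-coloring of a fine partition of $Y$ produces $D$ with $\mu\big(f^{-1}(D)\setminus g^{-1}(D)\big)\geq\frac14(\mu(Y)-\epsilon)$, and $B=f^{-1}(D)\setminus g^{-1}(D)$ then satisfies $f^{-1}(B)\cap g^{-1}(B)=\emptyset$. Iterating over all pairs $k,\ell<n$ gives an $n$-admissible set of measure $\geq\mu(Y)5^{-\binom{\pi(n)}{2}}$ (Lemma \ref{lem:basic-n-tower}); a supremum-and-contradiction argument, gluing a shifted copy of a near-optimal tower to a fresh one built in its complement, boosts the tower measure to $2^{-d}\mu(Y)-\epsilon$ (Lemma \ref{lem:n-adm}); and a second such bootstrapping over heights $2^kn$ drives the measure to $1$. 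If you want a complete proof along your lines you would have to solve the induced-action problem from scratch; as written, the proposal does not establish the theorem.
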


\noindent The proof given in \cite{H&S} for the case $d=1$ of this theorem uses  mostly elementary arguments, and involves also Zorn's lemma and the Poincar\'e recurrence theorem. We did not find a simple modification of this proof (or of the arguments in \cite{Conze,K&W}) yielding Theorem \ref{thm:Rokhlin}. We were also interested in whether Zorn's lemma could be avoided (note that this lemma is used also in several proofs of Theorem \ref{thm:Rokh} itself, for instance in \cite{CFS,K&M}). Our proof of Theorem \ref{thm:Rokhlin}, presented in Section \ref{sec:Rokhlin}, is completely elementary. 

In Section \ref{sec:apps} we discuss some applications of Theorem \ref{thm:Rokhlin}. In the setting of invertible maps, the applications of towers in ergodic theory are numerous and well documented (see \cite{Korn,Weiss}). Some of these results involving $\Z^d$-actions may be extended to non-invertible maps using Theorem \ref{thm:Rokhlin}, but for this paper we have chosen to treat different applications, of more recent origin and of combinatorial nature. The simplest one concerns the problem of finding solutions to an equation $c_1x_1=c_2x_2$ with integer coefficients $c_i$ and with variables $x_i$ lying in a given subset of a compact abelian group. A central quantity related to this problem is the following.
\begin{defn}\label{def:2vareqdens}
Let $c_1,c_2$ be non-zero integers and let $G$ be a compact abelian group with Haar probability $\mu$ on the Borel $\sigma$-algebra $\cB_G$. We say a set $A\in \cB_G$ is \emph{$(c_1,c_2)$-free} if there are no solutions $(x_1,x_2)\in A^2$ to the equation $c_1 x_1= c_2 x_2$. We define
\begin{equation}\label{eq:2vareqdens}
d_{(c_1,c_2)}(G)=\sup \big\{\mu(A): A\subset G\textrm{ is }(c_1,c_2)\textrm{-free}\big\}.
\end{equation}
\end{defn}
\noindent Using \cite[Theorem 2.5]{H&S} (the case $d=1$ of Theorem \ref{thm:Rokhlin}), Fiz-Pontiveros showed that for the circle group $\T=\R/\Z$ one has $d_{(1,\lambda)}(\T)=1/2$ for every non-zero integer $\lambda \neq 1$ (see \cite[Proposition 3.2]{Fiz}). The case $d=2$ of Theorem \ref{thm:Rokhlin} enables us to extend this result as follows (in particular this answers   \cite[Question 2]{Fiz}).
\begin{proposition}\label{prop:2vareqsT}
Let $c_1,c_2$ be distinct non-zero integers. Then $d_{(c_1,c_2)}(\T)=1/2$. 
\end{proposition}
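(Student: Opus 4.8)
The bound $d_{(c_1,c_2)}(\T)\le\tfrac12$ is soft. For any nonzero integer $c$ the map $x\mapsto cx$ is a $|c|$-to-one, $\mu$-preserving continuous surjection of $\T$, so for Borel $A\subset\T$ the image $cA$ is Borel and $\mu(cA)=\mu(\{x\in\T:cx\in cA\})\ge\mu(A)$. If $A$ is $(c_1,c_2)$-free then $c_1A$ and $c_2A$ are disjoint, whence $2\mu(A)\le\mu(c_1A)+\mu(c_2A)\le1$. It therefore remains to construct $(c_1,c_2)$-free sets of measure arbitrarily close to $\tfrac12$.

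For this I would first reduce to the case $|c_1|,|c_2|\ge2$ with $\hcf(|c_1|,|c_2|)=1$. Writing $g=\hcf(|c_1|,|c_2|)$: if $\bar A\subset\T$ is $(c_1/g,c_2/g)$-free, then $A:=\{x\in\T:gx\in\bar A\}$ is $(c_1,c_2)$-free (a solution $c_1x_1=c_2x_2$ with $x_1,x_2\in A$ would give $(c_1/g)(gx_1)=(c_2/g)(gx_2)$ with $gx_1,gx_2\in\bar A$) and $\mu(A)=\mu(\bar A)$, so $d_{(c_1,c_2)}(\T)\ge d_{(c_1/g,c_2/g)}(\T)$, reducing matters to coprime coefficients; if one coefficient then has absolute value $1$, the statement follows from the equality $d_{(1,\lambda)}(\T)=\tfrac12$ ($\lambda\ne1$ nonzero) of \cite[Proposition~3.2]{Fiz}, via the obvious symmetries reducing $(\pm1,\lambda)$- and $(\lambda,\pm1)$-freeness to $(1,\mu)$-freeness for a suitable integer $\mu\ne1$.

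So suppose $|c_1|,|c_2|\ge2$ and $\hcf(|c_1|,|c_2|)=1$, and let $f$ be the measure-preserving action of $\N_0^2$ on $(\T,\cB_\T,\mu)$ given by $f_{(i,j)}(x)=c_1^ic_2^jx$; put $T_1=f_{(1,0)}$, $T_2=f_{(0,1)}$. This action is \emph{free}: $f_k=f_\ell$ on a positive-measure set forces $|c_1|^{k_1}|c_2|^{k_2}=|c_1|^{\ell_1}|c_2|^{\ell_2}$, hence $k=\ell$ since $|c_1|,|c_2|$ are coprime and $\ge2$. The crux is that the solution set $\Sol=\{(x_1,x_2)\in\T^2:c_1x_1=c_2x_2\}$ equals the circle $\{(c_2t,c_1t):t\in\T\}$: the circle lies in $\Sol$, and both project onto the second coordinate with all fibres of cardinality $|c_1|$ (for the circle using $\ker(x\mapsto c_1x)\cap\ker(x\mapsto c_2x)=\{0\}$, valid as $\hcf(|c_1|,|c_2|)=1$), so they coincide; hence $A$ is $(c_1,c_2)$-free if and only if no $t$ has both $c_1t\in A$ and $c_2t\in A$, i.e.\ if and only if $T_1^{-1}(A)\cap T_2^{-1}(A)=\emptyset$. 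Now fix $\epsilon>0$ and a large integer $N$; by Theorem~\ref{thm:Rokhlin} there is a Borel $B\subset\T$ whose $N^2$ preimages $P_{i,j}:=f_{(i,j)}^{-1}(B)$ ($0\le i,j\le N-1$) are pairwise disjoint with $\mu\bigl(\bigcup_{i,j}P_{i,j}\bigr)\ge1-\epsilon$, so $\mu(P_{i,j})=\mu(B)\ge(1-\epsilon)N^{-2}$. Take $A=\bigcup_{(i,j)\in\mathcal I}P_{i,j}$ for a set $\mathcal I\subset\{0,\dots,N-2\}^2$. Since $T_1^{-1}(P_{i,j})=P_{i+1,j}$ and $T_2^{-1}(P_{i,j})=P_{i,j+1}$ are still among the $P_{i,j}$ (as $i,j\le N-2$), disjointness of the $P_{i,j}$ gives
\[
T_1^{-1}(A)\cap T_2^{-1}(A)=\bigcup\bigl\{P_{p,q}:(p,q)\in(\mathcal I+(1,0))\cap(\mathcal I+(0,1))\bigr\},
\]
which is empty exactly when $\mathcal I$ contains no two points differing by $(1,-1)$. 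Splitting $\{0,\dots,N-2\}^2$ into the anti-diagonals $\{i+j=\mathrm{const}\}$ (on each of which two points differ by $(1,-1)$ iff they are consecutive) and keeping every other point of each anti-diagonal provides such an $\mathcal I$ with $|\mathcal I|\ge\tfrac12(N-1)^2$; then $A$ is $(c_1,c_2)$-free with $\mu(A)=|\mathcal I|\,\mu(B)\ge\tfrac12(N-1)^2(1-\epsilon)N^{-2}\to\tfrac12$ as $N\to\infty$, $\epsilon\to0$. Combined with the first paragraph this yields $d_{(c_1,c_2)}(\T)=\tfrac12$. I expect the main obstacle to be the identification of $(c_1,c_2)$-freeness with the disjointness $T_1^{-1}(A)\cap T_2^{-1}(A)=\emptyset$, which hinges on $\Sol$ being connected and is precisely where coprimality of $|c_1|,|c_2|$ (and hence the second-paragraph reduction) is needed; granting that, the rest is a routine combination of Theorem~\ref{thm:Rokhlin} with the elementary combinatorics of anti-diagonals.
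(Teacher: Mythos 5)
Your proof is correct, and it follows essentially the same route as the paper: the paper proves the more general Proposition~\ref{prop:2vareqs} by the identical scheme --- soft upper bound $\le 1/2$, reduction to coprime coefficients via divisibility of the group, the degenerate case $|c_i|=1$ handled by the one-parameter result, and, in the main case, an application of Theorem~\ref{thm:Rokhlin} to the free $\N_0^2$-action generated by $x\mapsto c_1x$ and $x\mapsto c_2x$ followed by a selection of tower levels of density tending to $1/2$. The two differences are worth noting. First, the paper certifies $(c_1,c_2)$-freeness directly through forward images, observing that $d_{(c_1,c_2)}(G)=\sup\{\mu(A):T_{c_1}(A)\cap T_{c_2}(A)=\emptyset\}$ and exhibiting a set $B$ with $T_{c_1}(B)\cap T_{c_2}(B)=\emptyset$; this makes the step you single out as ``the main obstacle'' --- identifying the solution set $\{c_1x_1=c_2x_2\}$ with the single circle $\{(c_2t,c_1t)\}$ so as to translate freeness into the preimage condition $T_1^{-1}(A)\cap T_2^{-1}(A)=\emptyset$ --- entirely unnecessary. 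Your identification is nonetheless correct (and correctly located as the point where coprimality enters a second time), so your route is sound, just slightly longer. Second, the combinatorial selection differs: the paper takes a tower of shape $(t,2)$ and the single shifted row $\bigsqcup_{j\in[t-1]}(T_{c_1}^j\circ T_{c_2})^{-1}(A)$, whereas you take an $(N,N)$-tower and an anti-diagonal checkerboard; both give density $\to 1/2$, and the paper's choice is marginally more economical. Everything checks out, including the freeness of the action (your unique-factorization argument matches the paper's) and the density count $|\mathcal I|\ge\tfrac12(N-1)^2$.
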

\noindent The result holds for more general groups; see Proposition \ref{prop:2vareqs}. This result is in fact a simple application of a  general connection that Theorem \ref{thm:Rokhlin} establishes between a certain natural combinatorial problem concerning free measure-preserving actions of $\N_0^d$ and a problem concerning subsets $A$ of $\Z^d$ whose difference set $A-A$ avoids a prescribed finite set. This connection is developed in Subsection 3.2. We then relate this further to a similar problem on the circle group. Through this connection we obtain, in particular, the following generalization of Proposition \ref{prop:2vareqsT}.

\begin{proposition}\label{prop:bip2vareqs} 
Let $c_0=1$, let $c_1,\ldots,c_d$ be multiplicatively independent non-zero integers,\footnote{This means that if $c_1^{k_1}\cdots c_d^{k_d}=1$ with $k_i\in \Z$, then $k_i=0$ for every $i$.} and let $\Gamma$ be a bipartite graph on $\{0,1,\dots,d\}$. Then for every $\epsilon>0$ there is a Borel set $A\subset \T$ such that $\mu(A)\geq 1/2-\epsilon$ and $A$ is $(c_i,c_j)$-free for every edge $ij$ in $\Gamma$.
\end{proposition}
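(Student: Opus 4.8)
The plan is to reduce the statement on $\T$ to a combinatorial statement on $\Z^d$ via the connection advertised in Subsection 3.2, and then to establish that combinatorial statement by a direct construction exploiting the bipartite structure of $\Gamma$. Since $c_1,\dots,c_d$ are multiplicatively independent, the map $m\mapsto c_1^{m(1)}\cdots c_d^{m(d)}$ is injective on $\Z^d$, so the equation $c_i x_i=c_j x_j$ (with $i,j\in\{0,\dots,d\}$ and $c_0=1$) should translate, after composing with the canonical embeddings, into a condition of the form ``$A-A$ misses a prescribed finite set $S_\Gamma\subset\Z^d$'' for a suitable Borel set $A\subset\T$ pulled back from a set in a free $\N_0^d$-action — this is exactly the kind of equivalence Theorem \ref{thm:Rokhlin} is meant to power. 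Concretely, I would first spell out that finite set: for each edge $ij$ of $\Gamma$, the offending configurations contribute finitely many vectors (the exponent vectors of the ratio $c_i/c_j$, up to sign and the relevant scaling), and $S_\Gamma$ is their union together with $\pm$ symmetrization.

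The heart of the matter is then: construct $A\subset\T$, or equivalently a subset of $\Z^d$ of upper Banach density close to $1/2$ whose difference set avoids $S_\Gamma$. Here the bipartiteness of $\Gamma$ is the key gift. Write $\{0,\dots,d\}=P\sqcup Q$ for the bipartition. Every edge of $S_\Gamma$ connects a vertex of $P$ to one of $Q$, so each forbidden difference vector $v\in S_\Gamma$ has the shape (exponent vector of a $P$-index) minus (exponent vector of a $Q$-index), and in particular lies in a fixed affine-linear half-space structure determined by the partition. I would pick a suitable linear functional $\psi:\Z^d\to\Z$ (or $\R$) that is positive on the ``$P$-to-$Q$'' direction and negative on the ``$Q$-to-$P$'' direction — more precisely, one whose value at each $v\in S_\Gamma$ is bounded away from $0$ — and then take $A$ to be (the pullback to $\T$ of) a union of congruence classes, or a Bohr-type set, on which $\psi$ takes values in $[0,N/2)$ modulo $N$ for a large modulus $N$. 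Then $A-A$ cannot contain any $v$ with $|\psi(v)|$ in the forbidden range, and choosing $N$ large handles all finitely many $v\in S_\Gamma$ simultaneously while keeping $\mu(A)\geq 1/2-\epsilon$. The reduction via Theorem \ref{thm:Rokhlin} is what lets us transfer this $\Z^d$-construction back to an honest Borel subset of $\T$ with the same density.

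I expect the main obstacle to be the bookkeeping in the translation step: making precise how a $(c_i,c_j)$-free condition on $A\subset\T$ corresponds to a difference-set condition, keeping careful track of the coefficients $c_i$ (not just their exponent vectors) so that the scaling by $c_i$ on one side and $c_j$ on the other is correctly absorbed, and verifying that multiplicative independence is exactly what guarantees the resulting forbidden set in $\Z^d$ is finite and does not accidentally contain $0$ (which would make the problem unsolvable). A secondary technical point is ensuring the functional $\psi$ can be chosen to separate \emph{all} edges of the bipartite graph at once; this is where bipartiteness is essential, since for a non-bipartite $\Gamma$ an odd cycle would force $\psi$ to satisfy contradictory sign constraints. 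Once the finite forbidden set $S_\Gamma\subset\Z^d$ is identified and shown to sit on one side of a hyperplane (up to sign, respecting the bipartition), the construction of $A$ and the density computation are routine.
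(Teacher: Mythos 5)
Your high-level strategy is sound and is essentially the paper's: translate $(c_i,c_j)$-freeness on $\T$ into $\Gamma$-admissibility for the free $\N_0^d$-action $f_n(x)=c_1^{n(1)}\cdots c_d^{n(d)}x$ (freeness being exactly what multiplicative independence provides), use Theorem \ref{thm:Rokhlin} via Proposition \ref{prop:latticereduction} to reduce to finding a dense subset $S$ of a box in $\Z^d$ with $S-S$ disjoint from the finite set $\{\pm(v_i-v_j):ij\textrm{ an edge}\}$ (where $v_0=0$, $v_i=e_i$), and then exploit the bipartition. The paper packages the last step differently (via $\sigma_\T(\Gamma)\geq 1/\chi_c(\Gamma)=2^{-1}$, i.e.\ translating an interval of length $1/2-\epsilon$ by $0$ and $1/2$ according to the two colour classes, then transferring back through Lemma \ref{lem:sig=d} and Corollary \ref{cor:effect}), but this is the same two-colouring idea.

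The gap is in your concrete construction of the dense set, which as stated does the opposite of what you need. If $A=\{n:\psi(n)\bmod N\in[0,N/2)\}$, then $A-A$ essentially equals $\psi^{-1}\big((-N/2,N/2)+N\Z\big)$: two translates of a half-length arc in $\Z/N\Z$ are disjoint only when the translation is by $N/2$. So for large $N$ your $A-A$ \emph{contains} every forbidden vector $v$ with $0<|\psi(v)|<N/2$; ``$\psi(v)$ bounded away from $0$'' together with ``$N$ large'' is exactly the wrong combination. What you need is $\psi(v)\equiv N/2\pmod N$ for every forbidden $v$, and the natural choice is $N=2$: set $\psi(n)=\sum_i\lambda_i n(i)$ with $\lambda_i\in\{0,1\}$ recording which side of the bipartition vertex $i$ lies on (normalised so that the value assigned to vertex $0$, namely $\psi(v_0)=0$, matches its side). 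Then $\psi(v_i-v_j)$ is odd for every edge $ij$, the set $S=\psi^{-1}(2\Z)$ meets every large box in at least $(1/2-o(1))$ of its points, and $S-S\subset\psi^{-1}(2\Z)$ avoids all forbidden vectors. This is precisely the proper $2$-colouring that bipartiteness guarantees; your odd-cycle remark identifies the right obstruction, but it is a parity obstruction rather than a sign or half-space one. With this correction, feeding $S$ into the left-hand inequality of Proposition \ref{prop:latticereduction} produces the desired Borel set $A\subset\T$ with $\mu(A)\geq 1/2-\epsilon$.
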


\noindent The value $1/2$ is clearly optimal for non-empty bipartite graphs. Proposition \ref{prop:bip2vareqs} is a special case of a similarly optimal result that we obtain concerning the more general class of \emph{star-extremal} graphs; see Proposition \ref{cor:bip2vareqs}.

\section{Proof of Theorem \ref{thm:Rokhlin}}\label{sec:Rokhlin}

Given a measure-preserving action $f$ of $\N_0^d$ on $X$, and $n\in \N^d$, we say that a set $B\in \cX$ is $n$-\emph{admissible} if it is a base of an $n$-tower for $f$.\\
\indent Our starting point is the following result, which we shall then iterate in order to find $n$-admissible sets of positive measure.

\begin{proposition}\label{prop:2-tower}
Let $f,g$ be commuting endomorphisms on $(X,\cX,\mu)$ satisfying
\[
\mu(\{x\in X: f(x)=g(x)\})=0.
\]
Then for every set $Y\in\cX$ and every $\epsilon>0$, there exists a measurable set $B \subset f^{-1}(Y)$ satisfying $\mu(B)\geq \frac{1}{4}(\mu(Y)-\epsilon)$ and $f^{-1}(B) \cap g^{-1}(B)=\emptyset$. 
\end{proposition}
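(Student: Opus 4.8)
The plan is to find, inside $f^{-1}(Y)$, a set $B$ on which the two ``return'' conditions $f(x)=g(x)$-type overlaps are avoided, by a greedy/measure-theoretic colouring argument rather than by appealing to recurrence. First I would consider the set $Z = f^{-1}(Y)$; we want $B\subset Z$ with $f^{-1}(B)\cap g^{-1}(B)=\emptyset$, i.e.\ there is no $x$ with $f(x)\in B$ and $g(x)\in B$ simultaneously. The obstruction to putting a point $y$ into $B$ is the existence of some $x$ with, say, $f(x)=y$ and $g(x)\in B$ (or $g(x)=y$ and $f(x)\in B$). Since $f,g$ are measure preserving, the ``bad pair'' relation $y \sim y'$ defined by ``$\exists x:\ \{f(x),g(x)\}=\{y,y'\}$'' has controlled fibres, and the hypothesis $\mu(\{f=g\})=0$ means we may discard the diagonal. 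I expect to realise $B$ as one colour class of a suitable finite measurable colouring of $Z$ that is proper for this bad-pair relation; the factor $\tfrac14$ strongly suggests a $4$-colouring (two colours would be the naive ``bipartite'' guess, and the square of that is where $4$ comes from — one is effectively $2$-colouring a graph that is a union of two functional graphs, each of which is bipartite-ish after removing a null set).

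Concretely, I would proceed as follows. Set $W = f^{-1}(Y)\cap g^{-1}(f^{-1}(Y))$ or rather work directly with the maps $\varphi = f$ and $\psi = g$ restricted appropriately, and consider the two ``graphs'' $G_\varphi$ and $G_\psi$ on the vertex set $Z$ whose edge sets are $\{(f(x),g(x)): x\in X,\ f(x),g(x)\in Z\}$ — note this is a single edge set, but it can be split according to which of the two pushed-forward measures dominates, giving two pieces each of which is the graph of a (partial, measurable, measure-nonincreasing) map on $Z$. A measurable graph that is a union of two such functional graphs is measurably $4$-colourable off a null set (each functional graph is measurably $3$-colourable by a standard Kechris–Solecki–Todorcevic-type argument, but here one can do better: removing the null diagonal, each piece has no loops, and a union of two partial injections/functions is $4$-regular-ish, hence $5$-colourable trivially — but to hit exactly $\tfrac14$ I would instead split more carefully into two bipartite-like pieces). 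I would then take $B$ to be a heaviest colour class intersected with $f^{-1}(Y)$, discarding the null exceptional set, which gives $\mu(B)\ge \tfrac14\mu(f^{-1}(Y)) - \text{(null)} = \tfrac14\mu(Y)$; the $\epsilon$ is absorbed to handle the non-compactness of the colouring construction (approximating by finitely many atoms of a generating partition).

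The main obstacle I anticipate is producing the measurable $4$-colouring in an \emph{elementary} way, avoiding both Zorn's lemma and the descriptive-set-theoretic machinery, since the whole point of the paper is an elementary proof. I would try to sidestep this by an approximation/truncation argument: fix a large integer $N$, approximate $Y$ by a set that is a union of cells of a finite partition $\mathcal P$ refined enough that $f,g$ are ``almost'' cellwise affine, and on this finite combinatorial model the $4$-colouring is an honest finite-graph colouring problem, solvable because a union of two functional graphs has a core that is $4$-colourable (a pseudoforest has arboricity $2$, hence chromatic number $\le 4$ — actually $\le 3$, so one should recheck whether $\tfrac14$ is merely a safe constant rather than the optimal one; I suspect the authors take $\tfrac14$ for robustness, allowing the split $B\subset f^{-1}(Y)$ and the independence to be checked with room to spare). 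Letting $N\to\infty$ and the partition refine, the finite colourings converge (after passing to a subsequence, or by a martingale/compactness argument in $L^1$) to a measurable colouring of the genuine dynamical system, and the measure bound passes to the limit, completing the proof.
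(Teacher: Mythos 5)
There is a genuine gap here, and it sits exactly where you anticipated trouble: the measurable bounded colouring does not exist in the generality you need, and the limiting argument that is supposed to rescue it is not justified. Your ``bad-pair'' graph on $Z=f^{-1}(Y)$, with an edge between $f(x)$ and $g(x)$ for each $x$, is \emph{not} a union of two functional graphs when $f$ and $g$ are non-invertible: a single vertex $y$ has as neighbours the whole set $\{g(x):x\in f^{-1}(y)\}\cup\{f(x):x\in g^{-1}(y)\}$, and the fibres of a measure-preserving endomorphism can be arbitrarily large (even uncountable), so the degree is unbounded and the pseudoforest/arboricity bound giving chromatic number $\le 4$ fails. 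The proposed ``split according to which pushforward dominates'' does not turn these relations into partial functions. Separately, even granting finite colourings of combinatorial models, passing to the limit is problematic: $L^1$-weak limits of indicator functions need not be indicator functions, a subsequential limit of proper colourings need not be a proper colouring, and the exact set-theoretic condition $f^{-1}(B)\cap g^{-1}(B)=\emptyset$ does not survive an approximate limit (at best one gets an intersection of small measure, which then has to be cleaned up by a further argument you have not supplied). Finally, your diagnosis of the constant is off: $\tfrac14$ is not a chromatic number.

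The paper's proof avoids the colouring problem entirely by a change of viewpoint you are missing: instead of selecting $B$ directly inside $f^{-1}(Y)$, one selects a set $D\subset Y$ and puts $B=f^{-1}(D)\setminus g^{-1}(D)$. Commutativity then makes the disjointness automatic, with no combinatorics at all: $f^{-1}(B)\subset X\setminus f^{-1}g^{-1}(D)=X\setminus g^{-1}f^{-1}(D)$ while $g^{-1}(B)\subset g^{-1}f^{-1}(D)$. The only remaining task is to make $\mu\bigl(f^{-1}(D)\setminus g^{-1}(D)\bigr)$ large, and this is done by your own first instinct (a random $2$-colouring): take a fine finite partition of $Y$, include each atom in $D$ independently with probability $\tfrac12$, and compute the expectation. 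The first-moment term gives $\tfrac12\mu(Y)$, the cross term $\sum_{A,B}\P(A\cup B\subset D)\,\mu(f^{-1}(A)\cap g^{-1}(B))$ contributes $\tfrac14\mu(Y)$ plus an error supported on the set of $x$ with $f(x),g(x)$ in the same atom, and the hypothesis $\mu(\{f=g\})=0$ together with a separating refining sequence of partitions makes that error tend to $0$ --- this is where the $\epsilon$ comes from, and where the $\tfrac14=\tfrac12-\tfrac14$ comes from. So the correct skeleton is: random subset of $Y$ at density $\tfrac12$, plus the $f^{-1}(D)\setminus g^{-1}(D)$ trick; not a proper colouring of a relation on $f^{-1}(Y)$.
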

Recall that the atomless probability space $(X,\cX,\mu)$ is isomorphic, modulo a null set, to the interval $[0,1]$ with Lebesgue measure \cite[Theorem 9.4.7]{Bog2}. In particular, for some set $X'\in \cX$ with $\mu(X')=1$,  there is a sequence $\cP_0,\cP_1, \cP_2, \ldots$ of finite measurable partitions of $X'$ with the following properties: we have $\cP_0=\{X'\}$; for each $r$ the partition $\cP_{r+1}$ refines $\cP_r$ (that is every atom of $\cP_r$ is a union of atoms of $\cP_{r+1}$); the sequence $(\cP_r)$ separates the points of $X'$, that is, for every $x\neq y$ in $X'$ there exists $r$ and distinct atoms $A,B$ in $\cP_r$ such that $x\in A, y\in B$.

We shall use the following fact.
\begin{lemma}\label{lem:fp}
Let $F=\{x\in f^{-1}(X'): f(x)=g(x)\}$, and for each positive integer $r$ let $\Omega_r=\bigsqcup_{A\in \cP_r} f^{-1}(A)\cap g^{-1}(A)$. 
Then $F = \bigcap_{r\in \N} \Omega_r$. In particular, we have $\mu(\Omega_r)\to 0$ as $r\to \infty$.
\end{lemma}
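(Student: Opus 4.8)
The plan is to establish the two assertions of Lemma \ref{lem:fp} in turn: first the set identity $F=\bigcap_{r\in\N}\Omega_r$, and then deduce the measure convergence $\mu(\Omega_r)\to 0$ from it.

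For the set identity, I would argue by double inclusion. For the inclusion $F\subset\bigcap_r\Omega_r$, take $x\in F$, so $x\in f^{-1}(X')$ and $f(x)=g(x)=:y\in X'$. For each $r$, the point $y$ lies in exactly one atom $A\in\cP_r$ (since $\cP_r$ is a partition of $X'$), and then $f(x)=y\in A$ and $g(x)=y\in A$, so $x\in f^{-1}(A)\cap g^{-1}(A)\subset\Omega_r$; hence $x\in\bigcap_r\Omega_r$. For the reverse inclusion, suppose $x\in\bigcap_r\Omega_r$. From $x\in\Omega_1\subset f^{-1}(X')\cap g^{-1}(X')$ we get $f(x),g(x)\in X'$. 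For each $r$ there is an atom $A_r\in\cP_r$ with $f(x),g(x)\in A_r$. If $f(x)\neq g(x)$, then since the sequence $(\cP_r)$ separates points of $X'$ there would be some $r$ and distinct atoms $A,B\in\cP_r$ with $f(x)\in A$, $g(x)\in B$; but $f(x),g(x)\in A_r$ forces them into the same atom of $\cP_r$ for every $r$, a contradiction. Hence $f(x)=g(x)$, so $x\in F$. This gives $F=\bigcap_r\Omega_r$.

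For the measure convergence, note first that each $\Omega_r$ is measurable (a finite disjoint union of sets of the form $f^{-1}(A)\cap g^{-1}(A)$ with $A\in\cP_r$ measurable). Moreover the sequence $(\Omega_r)$ is decreasing: since $\cP_{r+1}$ refines $\cP_r$, every atom $A'\in\cP_{r+1}$ is contained in a unique atom $A\in\cP_r$, so $f^{-1}(A')\cap g^{-1}(A')\subset f^{-1}(A)\cap g^{-1}(A)$, and summing over the atoms of $\cP_{r+1}$ grouped by their $\cP_r$-parent shows $\Omega_{r+1}\subset\Omega_r$. By continuity of measure from above, $\mu(\Omega_r)\to\mu\big(\bigcap_r\Omega_r\big)=\mu(F)$. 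Finally, $F\subset\{x\in X:f(x)=g(x)\}$, which has measure zero by the hypothesis of Proposition \ref{prop:2-tower}; hence $\mu(F)=0$ and $\mu(\Omega_r)\to 0$.

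I do not expect any serious obstacle here; the argument is a routine unwinding of the separating-partition setup. The only point requiring a little care is the monotonicity $\Omega_{r+1}\subset\Omega_r$, which is what licenses the use of continuity from above — without it one would only get $\limsup_r\mu(\Omega_r)$ bounded by something, rather than a genuine limit. It is also worth being explicit that we only use $f^{-1}(X')$ rather than all of $X$ so that the atoms of the partitions $(\cP_r)$, which partition $X'$ and not $X$, actually cover the relevant preimages; this is why $F$ is defined with the constraint $x\in f^{-1}(X')$ (and note $\mu(f^{-1}(X'))=1$ since $f$ is measure-preserving, so this costs nothing).
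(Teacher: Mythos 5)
Your proof is correct and follows the same route as the paper's: the inclusion $F\subset\Omega_r$ is immediate, the reverse inclusion uses the separating property of $(\cP_r)$, the monotonicity $\Omega_{r+1}\subset\Omega_r$ comes from refinement, and continuity from above together with $\mu(\{f=g\})=0$ gives the convergence. You have simply spelled out the details that the paper's terse proof leaves implicit.
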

\begin{proof}
We have $F\subset \Omega_r$ for every $r$, so $F\subset \bigcap_{r\in \N} \Omega_r$. Since the sequence $(\cP_r)$ separates the points of $X'$, we also have $F\supset \bigcap_r \Omega_r$. Finally, note that $\Omega_r\supset \Omega_{r+1}$ for each $r$, since $\cP_{r+1}$ refines $\cP_r$. Hence $\mu(\Omega_r)\to \mu(F) =0$.
\end{proof}

\begin{proof}[Proof of Proposition \ref{prop:2-tower}]
Let $Y'=X'\cap Y$ and for each positive integer $r$ let $\cQ_r$ denote the partition of $Y'$ induced by $\cP_r$, namely the partition into sets $Q=P\cap Y'$, $P\in \cP_r$.\\
\indent It suffices to show that there is a measurable set $D\subset Y'$ satisfying 
\begin{equation}\label{eq:smallshift}
\mu\big(f^{-1} (D)\setminus g^{-1} (D)\big)\geq \frac{1}{4}(\mu(Y)-\epsilon).
\end{equation}
Indeed, if this holds then the measurable set $B=f^{-1}(D)\setminus g^{-1} (D)$ has the required properties, in particular we have $f^{-1}(B)\subset X\setminus g^{-1}f^{-1}(D)$ whereas $g^{-1}(B)\subset g^{-1}f^{-1}(D)$, whence $f^{-1}(B)\cap g^{-1}(B) =\emptyset$.\\
\indent To see that such a set $D$ exists, fix an arbitrary positive integer $r$, and let $D_r\subset Y'$ be generated randomly by letting each set $A\in \cQ_r$ be contained in $D_r$ independently with probability $1/2$ (and contained in $Y'\setminus D_r$ otherwise). We have
\begin{eqnarray*}
\E_{D_r}\; \mu\big(f^{-1}(D_r)\setminus g^{-1} (D_r)\big) & = & \E_{D_r}\; \mu\big(f^{-1}(D_r)\big)-\E_{D_r}\; \mu\big(f^{-1}(D_r)\cap g^{-1} (D_r)\big)\\
& = & \frac{\mu\big(Y\big)}{2}-\sum_{A,B\in \cQ_r} \P\big((A\cup B)\subset D_r\big)\; \mu\big(f^{-1}(A)\cap g^{-1} (B)\big).
\end{eqnarray*}
The last sum equals
\begin{eqnarray*}
&&\frac{1}{4}\sum_{A\neq B\in \cQ_r} \mu\big(f^{-1}(A)\cap g^{-1} (B)\big)+\frac{1}{2}\sum_{A\in \cQ_r} \mu\big(f^{-1}(A)\cap g^{-1} (A)\big)\\
& = &\frac{1}{4}\sum_{A,B\in \cQ_r} \mu\big(f^{-1}(A)\cap g^{-1} (B)\big)+\frac{1}{4}\sum_{A\in \cQ_r} \mu\big(f^{-1}(A)\cap g^{-1} (A)\big)\\
& = &\frac{1}{4}\mu\big(f^{-1}(Y)\cap g^{-1} (Y)\big)+\frac{1}{4}\sum_{A\in \cQ_r} \mu\big(f^{-1}(A)\cap g^{-1} (A)\big)\\
& \leq &\frac{1}{4}\big(\mu\big(Y\big)+\mu (\Omega_r)\big).
\end{eqnarray*}
Therefore
$\E_{D_r}\; \mu\big(f^{-1}(D_r)\setminus g^{-1} (D_r)\big) \geq  \frac{1}{4}\big(\mu\big(Y\big)-\mu (\Omega_r)\big)$. It follows that for each $r\in \N$ there exists a measurable set $D_r\subset Y'$ such that
\[
\mu\big(f^{-1}(D_r)\setminus g^{-1} (D_r)\big) \geq  \frac{1}{4}\big(\mu\big(Y\big)-\mu (\Omega_r)\big).
\]
By Lemma \ref{lem:fp}, we can satisfy \eqref{eq:smallshift} with $D=D_r$ for $r=r(\epsilon,f,g)$ sufficiently large.
\end{proof}
Given $Y\in \cX$, we now iterate Proposition \ref{prop:2-tower} to find, in some preimage of $Y$, an $n$-admissible set of measure proportional to $\mu(Y)$.

For $n=\big(n(1),\dots,n(d)\big)\in \N^d$ we denote by $\pi(n)$ the product $n(1)\cdots n(d)$.

\begin{lemma}\label{lem:basic-n-tower}
Let $n\in \N^d$ and let $Y\in \cX$. Then, for some $N\in \N_0^d$, there exists an $n$-admissible set $B \subset f_N^{-1}(Y)$ such that $\mu(B) \geq \mu(Y)/5^{\binom{\pi(n)}{2}} $.
\end{lemma}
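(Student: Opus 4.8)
We may assume $\mu(Y)>0$, since otherwise $B=\emptyset$ is $n$-admissible and satisfies the bound trivially. The plan is to iterate Proposition \ref{prop:2-tower} once for each of the $M:=\binom{\pi(n)}{2}$ unordered pairs of distinct elements of $\{k\in\N_0^d:0\le k<n\}$, losing a factor $\tfrac{1}{4}$ at each step, which I absorb into a factor $\tfrac{1}{5}$ using the freedom in $\epsilon$, so that the cumulative loss is $5^{M}$. Recall that $B$ is $n$-admissible precisely when $f_k^{-1}(B)\cap f_\ell^{-1}(B)=\emptyset$ for every such pair; introduce the shorthand that a measurable set $C$ \emph{resolves} a pair $(k,\ell)$ of distinct elements of $\N_0^d$ if $f_k^{-1}(C)\cap f_\ell^{-1}(C)=\emptyset$.

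The delicate point is that Proposition \ref{prop:2-tower} returns a base lying inside a preimage $f_a^{-1}(\,\cdot\,)$, and passing to such a preimage disturbs the resolutions achieved so far. The effect is harmless in one direction: if $C'\subset f_s^{-1}(C)$ then $f_k^{-1}(C')\cap f_\ell^{-1}(C')\subset f_{k+s}^{-1}(C)\cap f_{\ell+s}^{-1}(C)$, so $C'$ resolves $(k,\ell)$ as soon as $C$ resolves $(k+s,\ell+s)$ — passing to the preimage \emph{shifts the resolved pairs downward}. So rather than keeping the successive bases nested, I would precompute the total downward shift still to come and inflate each pair by that amount before feeding it to the proposition.

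Concretely: enumerate the pairs as $p_1=(k_1,\ell_1),\dots,p_M=(k_M,\ell_M)$, in any order, with an arbitrary choice of which entry is the first coordinate. Define vectors in $\N_0^d$ by
\[
\sigma_M:=0,\qquad w_j:=k_j+\sigma_j,\qquad \sigma_{j-1}:=\sigma_j+w_j\quad(j=M,M-1,\dots,1),
\]
so that $\sigma_j=\sum_{i>j}w_i$, and set $N:=\sigma_0=\sum_{j=1}^{M}w_j$. Then I would build sets $B_0:=Y,B_1,\dots,B_M$ recursively: given $B_{j-1}$ (of positive measure, as follows inductively), apply Proposition \ref{prop:2-tower} to the commuting endomorphisms $f_{k_j+\sigma_j}$ and $f_{\ell_j+\sigma_j}$ — which satisfy its hypothesis by freeness, since $k_j\ne\ell_j$ forces $k_j+\sigma_j\ne\ell_j+\sigma_j$ — with $B_{j-1}$ in the role of $Y$ and $\epsilon$ small enough, obtaining $B_j\subset f_{k_j+\sigma_j}^{-1}(B_{j-1})$ with $\mu(B_j)\ge\mu(Y)/5^{j}$ and such that $B_j$ resolves $(k_j+\sigma_j,\ell_j+\sigma_j)$. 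Then $B:=B_M$ should have the required properties: composing the inclusions $B_i\subset f_{w_i}^{-1}(B_{i-1})$ gives $B\subset f_{\sigma_j}^{-1}(B_j)$ for every $j$, in particular $B\subset f_{N}^{-1}(Y)$, and $\mu(B)\ge\mu(Y)/5^{M}$; moreover, for each $j$ the inclusion $B\subset f_{\sigma_j}^{-1}(B_j)$ and the downward-shift observation show that $B$ resolves $(k_j,\ell_j)$, and as $j$ runs over $\{1,\dots,M\}$ this covers every pair, so $B$ is $n$-admissible.

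The step I expect to be the crux is the bookkeeping of the second paragraph. A naive iteration keeping $B_j\subset B_{j-1}$ would force every shift produced by Proposition \ref{prop:2-tower} to vanish, which is only possible for pairs $(k,\ell)$ whose coordinatewise reductions are comparable; incomparable reductions already occur once $d\ge2$, so some genuine shifting cannot be avoided. Recognising that these shifts act \emph{downward} on the resolved pairs, and precomputing them so that the $j$-th application of the proposition resolves the inflated pair $(k_j+\sigma_j,\ell_j+\sigma_j)$ — which then descends to a resolution of $(k_j,\ell_j)$ in $B_M$ — is what lets the recursion close. The remaining details (the choice of each $\epsilon$, the inductive bound $\mu(B_j)\ge\mu(Y)/5^{j}$, and the inequality $k_j+\sigma_j\ne\ell_j+\sigma_j$) are routine.
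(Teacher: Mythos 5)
Your proof is correct, and it follows the same basic strategy as the paper: iterate Proposition \ref{prop:2-tower} once for each of the $\binom{\pi(n)}{2}$ pairs of distinct $k,\ell<n$, absorbing the $\tfrac14$ into a $\tfrac15$ by choosing $\epsilon$ small relative to the current measure. The one place where you diverge is the ``pair inflation'' bookkeeping with the vectors $\sigma_j$, and this is where I'd push back: the difficulty you are guarding against is not actually there. The fact you label as the ``harmless direction'' has a companion going the other way: since $f_{k+s}^{-1}=f_s^{-1}\circ f_k^{-1}$ and preimages commute with intersections, if $C$ resolves $(k,\ell)$ then $f_{k+s}^{-1}(C)\cap f_{\ell+s}^{-1}(C)=f_s^{-1}\big(f_k^{-1}(C)\cap f_\ell^{-1}(C)\big)=\emptyset$, i.e.\ $C$ automatically resolves $(k+s,\ell+s)$ for every $s\in\N_0^d$. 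Combining this upward shift with your downward shift, the naive iteration $B_j\subset f_{k_j}^{-1}(B_{j-1})$ (which is what the paper does --- note it is \emph{not} the iteration $B_j\subset B_{j-1}$ that you rule out) already preserves every previously resolved pair: $B_{j-1}$ resolves $(k_{j-1},\ell_{j-1})$, hence resolves $(k_{j-1}+k_j,\ell_{j-1}+k_j)$, hence $B_j$ resolves $(k_{j-1},\ell_{j-1})$. So your precomputed shifts $\sigma_j$ can be deleted wholesale and the argument still closes; what they buy you is only that you never need to invoke the upward-shift observation explicitly.
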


\begin{proof}
Let us fix any distinct $k_1, \ell_1\in \N_0^d$ with $k_1, \ell_1 <n$. We  apply Proposition \ref{prop:2-tower} with $f=f_{k_1},g=f_{\ell_1}$, to obtain $B_1 \subset f_{k_1}^{-1}(Y)$ with $\mu(B_1) \geq \mu(Y)/5$ and $f_{k_1}^{-1}(B_1) \cap f_{\ell_1}^{-1}(B_1)=\emptyset$. Now we apply the proposition again with $Y=B_1$ and $f_{k_2},f_{\ell_2}$ for some other pair of distinct $k_2, \ell_2<n$, to obtain $B_2 \subset f_{k_2}^{-1}(B_1)\subset f_{k_1+k_2}^{-1}(Y)$ such that $\mu(B_2) \geq \mu(B_1)/5$ and $f_{k_2}^{-1}(B_2)\cap f_{\ell_2}^{-1}(B_2)=\emptyset$.  Proceeding in this way for each of the remaining pairs $k,\ell <n$, the result follows.
\end{proof}
We shall now enhance Lemma \ref{lem:basic-n-tower}, by showing that the measure of the $n$-tower $B_{(n)}$ can be guaranteed to be at least a fixed fraction (independent of $n$) of the measure of the original set $Y$.
\begin{lemma}\label{lem:n-adm}
Let $n \in \N^d$ and let $Y\in \cX$. Then for every $\epsilon>0$ there exists $N\in\N_0^d$ and an $n$-admissible set $B\subset f_N^{-1}(Y)$ such that $\mu\big(B_{(n)}\big)> 2^{-d}\mu(Y)-\epsilon$.
\end{lemma}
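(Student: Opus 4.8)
The plan is to amplify the exponentially-small-but-positive tower furnished by Lemma \ref{lem:basic-n-tower} into one of measure $2^{-d}\mu(Y)-\epsilon$. The shape of the target constant, $2^{-d}=\prod_{j=1}^{d}\tfrac12$, suggests treating the coordinate directions one at a time, each costing a factor of at most $\tfrac12$; concretely I would argue by induction on $d$, the case $d=0$ being trivial (take $B=Y$). In the inductive step one has the free sub-action generated by $T_1,\dots,T_{d-1}$, to which the inductive hypothesis applies, producing a $(d-1)$-tower of measure a bit above $2^{-(d-1)}\mu(Y)$; one then has to ``thicken'' this structure in the $T_d$-direction, stacking $n(d)$ copies of it and losing at most a factor $\tfrac12$ in the process, using that $T_d$ commutes with everything already constructed.

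Both the base of the induction (a single free endomorphism) and the thickening step are instances of the same one-dimensional task: given a set and a single free endomorphism $S$, build an $S$-tower of prescribed height, sitting in a suitable preimage of the set, of measure at least half that of the set. For this I would iterate Lemma \ref{lem:basic-n-tower}, repeatedly adjoining a fresh piece of tower --- obtained by one more application of that lemma to a residual set --- to the part already built. The flexibility in the preimage level $N$ (which, importantly, Lemma \ref{lem:basic-n-tower} produces bounded in terms of $n$ alone) lets one keep every base inside a single preimage $f_N^{-1}(Y)$; disjointness of two towers $B^{(1)}_{(n)}$, $B^{(2)}_{(n)}$ then already forces $B^{(1)}\cup B^{(2)}$ to be $n$-admissible, since each floor $f_k^{-1}(B^{(i)})$ is by definition a part of $B^{(i)}_{(n)}$, so that the new base adjoins cleanly and its tower contributes its full measure.

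The obstacle I expect to be genuinely delicate --- and it is precisely what makes this proof differ from the invertible treatments of Conze and of Katznelson--Weiss --- is that one cannot look for additional tower ``in the complement'' of what has been built. Demanding that a new $S$-tower be disjoint from the accumulated tower $W$ forces its base to avoid the forward images $S^{j}(W)$, and for a non-invertible $S$ these can have much larger measure than $W$ (as already happens for the doubling map), so a naive exhaustion stalls. The way around this should be to phrase every step purely in terms of preimages --- re-basing the accumulated tower to a sufficiently deep preimage of the ambient set before each augmentation, and applying Lemma \ref{lem:basic-n-tower} there --- so that only measure-preserving, monotone operations occur and the uncovered portion of the relevant region can be driven down to half. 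The careful part will be checking, across all iterations and for every coordinate, that each newly adjoined piece is really disjoint from all the earlier ones (not just in the coordinate directions, but for every ``join'' relation defining $n$-admissibility), and that the deep-preimage re-basings are compatible with keeping all bases inside one preimage of $Y$.
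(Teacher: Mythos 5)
Several of your ingredients are the right ones: re-basing the accumulated tower to a deep preimage so that every disjointness constraint becomes a preimage constraint (this is exactly how the paper circumvents the forward-image problem, via $B'=f_n^{-1}(D)$), the observation that the union of two $n$-admissible sets with disjoint towers is again $n$-admissible, and an exhaustion driven by Lemma \ref{lem:basic-n-tower} applied to a residual set. But the architecture you choose --- induction on $d$, with a ``thickening in the $T_d$-direction'' costing a factor $\tfrac12$ per coordinate --- places the real difficulty precisely at the step you defer. If $B_1$ is the base of an $\big(n(1),\dots,n(d-1)\big)$-tower for the sub-action of $T_1,\dots,T_{d-1}$ and you build an $n(d)$-tower for $T_d$ whose base $C$ sits inside a deep $T_d$-preimage of $B_1$, you do get disjointness of $f_k^{-1}(C)$ and $f_\ell^{-1}(C)$ when $k,\ell$ differ only in the last coordinate, or only in the first $d-1$; but when they differ in both, the two floors are preimages of \emph{disjoint} sets under \emph{different} powers of $T_d$, and for a non-invertible $T_d$ nothing forces them apart. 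This cross-coordinate disjointness is the entire content of the multiparameter statement, and the paper's introduction explicitly records that the authors could not make the inductive ($\Z^d$-style) route of Conze and Katznelson--Weiss work in the non-invertible setting. As written, your proposal does not resolve it.

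The paper instead runs a single exhaustion directly in $d$ dimensions, and the constant $2^{-d}$ has a different origin from the one you propose. Let $\rho$ be the supremum of $\mu(B)$ over $n$-admissible $B\subset f_N^{-1}(Y)$ with $N\in\N_0^d$, take $D$ nearly attaining it, and re-base to $B'=f_n^{-1}(D)$. For a new $n$-tower (produced by Lemma \ref{lem:basic-n-tower} at a yet deeper level) to have its tower disjoint from that of $B'$, it suffices that its base avoid the preimages $f_m^{-1}(D)$ with $0\le m<2n$, i.e.\ the set $D_{(2n)}$, whose measure is at most $2^d\,\pi(n)\,\mu(D)=2^d\,\mu(D_{(n)})$. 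Hence the augmentation can only fail once $2^d\mu(D_{(n)})\ge\mu(Y)$, which is exactly the threshold $\mu(B_{(n)})\ge 2^{-d}\mu(Y)$; the union of the two bases is verified to be $n$-admissible by precisely your disjoint-towers criterion, contradicting the maximality of $\rho$. So $2^{-d}$ is the volume ratio of the box $[0,n)$ to the box $[0,2n)$ in $\Z^d$, arising in one shot rather than as a product of $d$ successive halvings. To complete your argument you would either have to supply the missing thickening step or abandon the induction in favour of this direct $d$-dimensional exhaustion.
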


\begin{proof}
Let $\rho=\sup\{\mu(B): B\subset f_N^{-1}(Y)\textrm{ is }n\textrm{-admissible},\, N\in \N_0^d\}$ and suppose for a contradiction that $\rho<\mu(Y)/(2^d \pi(n))$. Let $\delta=\mu(Y)-2^d \,\pi(n)\,\rho>0$. By definition of $\rho$, there exists $N_0\in \N_0^d$ and an $n$-admissible set $D\subset f_{N_0}^{-1}(Y)$ satisfying  $\mu(D)>\rho-\delta\, 5^{-\binom{\pi(n)}{2}}$. Let $Y'=f_{N_0+n}^{-1}(Y)$ and $B'=f_n^{-1}(D)\subset Y'$.\\
\indent By Lemma \ref{lem:basic-n-tower} applied to  $Y' \setminus D_{(2 n)}$,  where $2n=\big(2n(1),2n(2),\ldots,2n(d)\big)$, there exists $N_1\in \N_0^d$ and an $n$-admissible set $B'' \subset f_{N_1}^{-1}(Y' \setminus D_{(2n)}) \subset f_{N_1}^{-1}(Y' \setminus B'_{(n)})$ such that $\mu(B'') \geq \mu(Y'\setminus D_{(2n)})\,5^{-\binom{\pi(n)}{2}}\geq \delta\,5^{-\binom{\pi(n)}{2}}$. Let $B=f_{N_1}^{-1}(B' )\sqcup B''$, and note that $B\subset f_N^{-1}(Y)$ where $N=N_0+N_1+n$.\\
\indent We claim that $B$ is $n$-admissible. Indeed, for any distinct $k,\ell\in \N_0^d$ with $k,\ell<n$, we have
\begin{eqnarray*}
f_k^{-1}(B)\cap f_\ell^{-1}(B) & = & \big(f_{N_1+k}^{-1}(B' )\sqcup f_k^{-1}(B'')\big)\; \cap \; \big(f_{N_1+\ell}^{-1}(B' )\sqcup f_\ell^{-1}(B'')\big)\\
& = & \big(f_{N_1+k}^{-1}(B' )\cap f_{N_1+\ell}^{-1}(B' )\big)\; \sqcup \; \big(f_k^{-1}(B'')\cap f_{N_1+\ell}^{-1}(B')\big)\\
&& \hspace{0.2cm}\;\sqcup\; \big(f_{N_1+k}^{-1}(B' )\cap f_\ell^{-1}(B'')\big)\;\sqcup\; \big(f_k^{-1}(B'')\cap f_\ell^{-1}(B'')\big).
\end{eqnarray*}
Here the first and fourth intersections are empty, since $f_{N_1}^{-1}(B')$ and $B''$ are $n$-admissible. The second intersection is also  empty, since $f_k^{-1}(B'')$ lies in the complement of $f_{N_1+k}^{-1}(D_{(2n)})$ while $f_{N_1+\ell}^{-1}(B')$ lies in $f_{N_1+n+\ell}^{-1}(D)\subset f_{N_1+k}^{-1}(D_{(2n)})$. Similarly, the third intersection is empty, so our claim holds.\\
\indent Thus we have obtained a set $B\subset f_N^{-1}(Y)$ that is $n$-admissible and that satisfies $\mu(B)= \mu(D)+\mu(B'')>\rho$, a contradiction.
\end{proof}
We can now prove our main result.

\begin{proof}[Proof of Theorem \ref{thm:Rokhlin}]
For each $N \in \N^d$, let
\[
c_N=\sup\{\mu(B_{(N)}): B\in \cX \textrm{ is }N\textrm{-admissible}\}.
\]
For each $n\in \N^d$, the sequence $(c_{2^kn})_{k\in \N}$ is decreasing. Indeed, given any $k\in \N$, if $A$ is a $2^{k+1} n$-admissible set, then the following set is $2^k n$-admissible:
\[
B= \bigsqcup_{t_1,t_2,\ldots,t_d\, \in\,\{0,1\}} f_{\big(t_1 2^k n(1),\ldots,t_d 2^k n(d)\big)}^{-1}(A),
\]
and so $\mu(A_{(2^{k+1} n)})=\mu(B_{(2^k n)})\leq c_{2^k n}$, whence $c_{2^{k+1} n}\leq c_{2^k n}$.\\
\indent Now fix any $n\in \N^d$ and let $c=\inf_{k\in \N} c_{2^kn}$.  By Lemma \ref{lem:n-adm} applied with $Y=X$, we have $c \geq 2^{-d}$. We shall prove that $c\geq 1$.\\
\indent Suppose for a contradiction that $c<1$, fix $N=2^k n$ for an arbitrary $k\in \N$, and fix an arbitrary $\delta\in (0,c)$. Let $K \in \N$ be sufficiently large so that firstly $d\,2^{-K}\leq \delta$ and secondly there exists a $2^K N$-admissible set $B'$ satisfying
\begin{equation}\label{eq:bound}
\big |\,\mu\big(B'_{(2^K N)}\big)-c\,\big|\leq \delta.
\end{equation}
Let $Y'=X \setminus B_{(2^K N)}'$. 
By Lemma \ref{lem:n-adm} applied with $Y=Y'$, there exists $N'$ and an $N$-admissible set $B''\subset f_{N'}^{-1}(Y')$ satisfying 
$\mu(B_{(N)}'')\geq 2^{-d-1}\mu(Y')$.\\
\indent Let $D$ be the following $N$-admissible set:
\[
D=\bigsqcup_{t_1,\ldots,t_d\, \in\, \{1,\dots,2^K-1\}} f_{\big(t_1 N(1),\ldots,t_d N(d)\big)+N'}^{-1}(B').
\]
Note that $D$ lies in $f_{N'}^{-1}(B'_{(2^KN)})$ and is therefore disjoint from $B''$. Let
\[
B= B''\;\sqcup\;D.
\]
We claim that $B$ is $N$-admissible. Indeed, for every distinct $i,j<N$, we have
\begin{eqnarray*}
f_i^{-1}(B)\cap f_j^{-1}(B) & = & (f_i^{-1}(B'')\sqcup f_i^{-1}(D))\cap (f_j^{-1}(B'')\sqcup f_j^{-1}(D))\\
& = & [f_i^{-1}(B'')\cap f_j^{-1}(B'')]\sqcup  [f_i^{-1}(D)\cap f_j^{-1}(B'')]\\
&& \hspace{0.3cm}\;\sqcup\; [f_i^{-1}(B'')\cap f_j^{-1}(D)]\sqcup  [f_i^{-1}(D)\cap f_j^{-1}(D)].
\end{eqnarray*}
Here the first and fourth intersections are empty since $B'', D$ are both $N$-admissible. The second intersection is empty, for if there existed $z\in f_i^{-1}(D)\cap f_j^{-1}(B'')$ then $f_j(z)$ would lie in $B''\subset X\setminus f_{N'}^{-1}(B'_{(2^KN)})$, yet we would also have 
\begin{eqnarray*}
f_j(z)\in f_j(f_i^{-1}(D)) & \subset & \bigcup_{t_1,\ldots,t_d\, \in\, \{1,\dots,2^K-1\}} f_j(f_{(t_1 N(1),\ldots,t_d N(d))+N'+i}^{-1}(B'))\\
& \subset &  \bigcup_{t_1,\ldots,t_d\, \in\, \{1,\dots,2^K-1\}} f_{(t_1 N(1),\ldots,t_d N(d))+N'+i-j}^{-1}(B')\\
& \subset & f_{N'}^{-1}(B_{(2^KN)}'),
\end{eqnarray*}
a contradiction. Similarly the third intersection is empty, so $B$ is indeed $N$-admissible.\\
The fact that $f_i^{-1}(D), f_j^{-1}(B'')$ are disjoint for all $0\leq i,j<N$ also implies that $B_{(N)}''$ and $D_{(N)}$ are disjoint.\\
\indent We have thus obtained an $N$-admissible set $B$ satisfying
\[
\mu(B_{(N)})= \mu(B_{(N)}'') + \mu(D_{(N)}) \geq 2^{-d-1}\mu(Y')+ \mu(B'_{(2^K N)})-\mu(B'_{(N)})\,d\,(2^K)^{(d-1)}.
\]
Since $\mu(B'_{(N)})=\mu(B'_{(2^K N)})\,2^{-Kd}$, we therefore have, using \eqref{eq:bound}, that
\[
c_N  \geq  2^{-d-1}\mu(Y')+ \mu(B'_{(2^K N)})(1-d\,2^{-K})
 \geq  2^{-d-1}(1-c-\delta) + (c-\delta)(1-\delta).
\]
Since $\delta$ was arbitrary, we deduce that $c_{2^k n} \geq c+2^{-d-1}(1-c)$, and since $k$ was arbitrary, we deduce that $c \geq c+2^{-d-1}(1-c)>c$, a contradiction.
\end{proof}
\section{Applications}\label{sec:apps}
\subsection{2-variable equations on compact abelian groups}
\noindent A central topic in additive combinatorics consists in determining the greatest size that a subset of an abelian group can have without containing solutions to a given integer linear equation. The simplest non-trivial case is that of a 2-variable homogeneous equation, which we write in general form as $c_1 x_1 = c_2 x_2$, for fixed non-zero integer coefficients $c_1,c_2$.  For a compact abelian group $G$ and an integer $n$, let $T_n\colon G\to G$, $x\mapsto n x$. For  $A\subset G$ let $T_n(A)=\{n\,a:a\in A\}$. We consider the problem of determining the quantity $d_{(c_1,c_2)}(G)$ from Definition \ref{def:2vareqdens}
, that is
\[
d_{(c_1,c_2)}(G)=\sup \big\{\mu(A): A\in \cB_G,\; T_{c_1}(A)\cap T_{c_2}(A)=\emptyset\big\}.
\]
\noindent If $T_{c_1},T_{c_2}$ are both surjective then they preserve $\mu$ and,  since $T_{c_i}^{-1} T_{c_i} A\supset A$, we then have $\mu(T_{c_i}(A))\geq \mu(A)$; it follows that $d_{(c_1,c_2)}(G)\leq 1/2$. Theorem \ref{thm:Rokhlin} yields a simple proof that in fact $d_{(c_1,c_2)}(G)= 1/2$ under quite general conditions on $G$. More precisely, this holds provided that the triple $(G,\cB_G,\mu)$ yields an atomless standard probability space and that the endomorphisms $f_n=T_{c_1}^{n(1)}\circ T_{c_2}^{n(2)}$ form a free action of $\N_0^2$ on this space. We record this as follows.

\begin{proposition}\label{prop:2vareqs}
Let $G$ be a polish divisible compact abelian group. Then for every  distinct non-zero integers $c_1,c_2$, we have $d_{(c_1,c_2)}(G)=1/2$. 
\end{proposition}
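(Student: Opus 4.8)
The plan is to deduce Proposition \ref{prop:2vareqs} from Theorem \ref{thm:Rokhlin} applied to the $\N_0^2$-action generated by the endomorphisms $T_{c_1}, T_{c_2}$ on $(G,\cB_G,\mu)$. First I would verify the hypotheses needed to invoke Theorem \ref{thm:Rokhlin}. Since $G$ is polish and compact, $(G,\cB_G,\mu)$ is a standard probability space; since $G$ is divisible and (being a nontrivial compact group under the equation problem — note the case $G$ trivial is vacuous) infinite, $\mu$ is atomless. Because $G$ is divisible, each $T_n$ is surjective, hence (being a continuous surjective endomorphism of a compact group) measure-preserving, so the $T_n$ form a measure-preserving action of $\N_0^2$, with $f_{(a,b)} = T_{c_1}^a\circ T_{c_2}^b = T_{c_1^a c_2^b}$. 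The remaining and most substantial hypothesis check is \emph{freeness}: for distinct $(a,b),(a',b')\in\N_0^2$ we must show $\mu\big(\{x: T_{c_1^ac_2^b}(x)=T_{c_1^{a'}c_2^{b'}}(x)\}\big)=0$, i.e.\ that $\{x\in G: mx=0\}$ is a $\mu$-null set whenever $m=c_1^ac_2^b-c_1^{a'}c_2^{b'}\neq 0$. Since $c_1,c_2$ are distinct nonzero integers, $c_1^ac_2^b = c_1^{a'}c_2^{b'}$ with $(a,b)\neq(a',b')$ would force $c_1,c_2$ to be multiplicatively dependent in a way incompatible with distinctness — one checks directly that two distinct nonzero integers cannot satisfy $c_1^ac_2^b=c_1^{a'}c_2^{b'}$ for distinct exponent pairs unless one of them is $\pm1$ and even then distinctness rules it out (the cases $\{c_1,c_2\}=\{1,-1\}$, $\{1,k\}$, $\{-1,k\}$ are handled by hand) — so $m\neq0$. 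Then $\ker T_m = \{x: mx=0\}$ is a \emph{closed proper} subgroup of $G$: it is proper because $T_m$ is surjective with $G$ infinite, hence not injective fails — more carefully, a nontrivial divisible compact group has elements of infinite order or arbitrarily large finite order, so $\ker T_m\neq G$; a proper closed subgroup of a compact group has Haar measure zero. This gives freeness.

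With the hypotheses in place, the argument for $d_{(c_1,c_2)}(G)\geq 1/2$ runs as follows. Fix $\epsilon>0$ and apply Theorem \ref{thm:Rokhlin} with $n=(1,2)$ (or $(2,1)$ — a tower of total "width" $2$ in one coordinate): there is a measurable set $B$ such that $B$, $f_{(0,1)}^{-1}(B) = T_{c_2}^{-1}(B)$ are disjoint — wait, more precisely with $n=(1,2)$ the tower has base $B$ and levels $f_{(0,0)}^{-1}(B)=B$ and $f_{(0,1)}^{-1}(B)=T_{c_2}^{-1}(B)$, pairwise disjoint, with $\mu(B\cup T_{c_2}^{-1}(B))\geq 1-\epsilon$; since these two sets are disjoint and $T_{c_2}$ preserves $\mu$ we get $2\mu(T_{c_2}^{-1}(B))\le 1$ is not quite what I want. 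The cleaner choice: take $A = B$ itself and observe $T_{c_1}(A)\cap T_{c_2}(A)=\emptyset$ is equivalent to $A\cap T_{c_1}^{-1}T_{c_2}(A)$ type condition; instead I would use the tower with base $B$ and the single relation $f_{k}^{-1}(B)\cap f_\ell^{-1}(B)=\emptyset$ for suitable $k,\ell$ translating to $T_{c_1}^{-1}(B)\cap T_{c_2}^{-1}(B)=\emptyset$, then set $A=T_{c_1}^{-1}(B)\cup T_{c_2}^{-1}(B)$... The correct packaging, which I would present carefully, is: applying Theorem \ref{thm:Rokhlin} to the $\N_0^2$-action with $n=(2,1)$ yields $B$ with $B$ and $T_{c_1}^{-1}(B)$ disjoint and $\mu(B\cup T_{c_1}^{-1}(B))\ge 1-\epsilon$; but I actually need a set $A$ with $T_{c_1}A\cap T_{c_2}A=\emptyset$, equivalently $A\cap (T_{c_1}^{-1}T_{c_2}A\cup\ldots)$ — the honest route is to note that since $f_{(1,0)}=T_{c_1}$ and $f_{(0,1)}=T_{c_2}$ commute, disjointness of $f_{(1,0)}^{-1}(B)$ and $f_{(0,1)}^{-1}(B)$ gives $T_{c_1}^{-1}(B)\cap T_{c_2}^{-1}(B)=\emptyset$; taking $A=T_{c_1}^{-1}(B)$ we have $T_{c_1}(A)\subset B$ up to the usual $T_{c_1}^{-1}T_{c_1}\supset$ subtlety, and $T_{c_2}$ applied appropriately lands in the disjoint complement — I would sort out the exact indices so that $T_{c_1}(A)$ and $T_{c_2}(A)$ end up in complementary pieces of the height-$2$ tower. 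Whatever the precise bookkeeping, one obtains a $(c_1,c_2)$-free set of measure $\geq \tfrac12-\tfrac\epsilon2$ (half of a tower of measure $\geq 1-\epsilon$ that is split into two disjoint measure-equal levels), and letting $\epsilon\to0$ gives $d_{(c_1,c_2)}(G)\geq 1/2$.

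Finally, for the reverse inequality $d_{(c_1,c_2)}(G)\leq 1/2$ I would invoke the observation already recorded in the excerpt just before the proposition: since $G$ is divisible, $T_{c_1}$ and $T_{c_2}$ are surjective, hence measure-preserving, so $\mu(T_{c_i}(A))\ge \mu(T_{c_i}^{-1}T_{c_i}(A))\ge\mu(A)$ — actually $\mu(T_{c_i}(A))\ge\mu(A)$ follows from $T_{c_i}^{-1}(T_{c_i}(A))\supseteq A$ and $T_{c_i}$ measure-preserving — and if $A$ is $(c_1,c_2)$-free then $T_{c_1}(A)$ and $T_{c_2}(A)$ are disjoint, so $2\mu(A)\le\mu(T_{c_1}(A))+\mu(T_{c_2}(A))=\mu(T_{c_1}(A)\cup T_{c_2}(A))\le 1$, giving $\mu(A)\le 1/2$. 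Combining the two bounds yields $d_{(c_1,c_2)}(G)=1/2$. The main obstacle in this proof is the freeness verification — specifically, showing that $c_1^ac_2^b=c_1^{a'}c_2^{b'}$ with distinct exponent pairs is impossible for distinct nonzero integers $c_1,c_2$ (a small number-theoretic case analysis, cleanly true when $c_1,c_2$ are multiplicatively independent but requiring the extra hypothesis "distinct" to handle $\pm1$), together with checking that $\{x\in G: mx=0\}$ is Haar-null for $m\neq0$, which reduces to the standard fact that a proper closed subgroup of a compact group is null once one knows divisibility forces $T_m$ to have proper, hence non-full, kernel. The combinatorial translation in the lower-bound step is routine but needs care with indices to make sure the $(c_1,c_2)$-free set is extracted correctly from the Rokhlin tower.
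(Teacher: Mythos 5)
Your overall strategy --- apply Theorem \ref{thm:Rokhlin} to the $\N_0^2$-action generated by $T_{c_1},T_{c_2}$, and pair it with the easy upper bound $\mu(A)\le 1/2$ from surjectivity --- is the paper's strategy, and your upper-bound paragraph is fine. But there are two genuine gaps in the rest.

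First, the freeness verification is wrong as stated. You claim that two distinct non-zero integers cannot satisfy $c_1^a c_2^b=c_1^{a'}c_2^{b'}$ for distinct exponent pairs unless one of them is $\pm1$; this is false, e.g.\ $c_1=2$, $c_2=4$ gives $c_1^2c_2^0=c_1^0c_2^1$, so $f_{(2,0)}=f_{(0,1)}$ and the action generated by $T_2,T_4$ is not free. Distinctness does not imply multiplicative independence. The paper deals with this by first using divisibility of $G$ to reduce to the case where $c_1,c_2$ are \emph{coprime} (if $c_i=\ell c_i'$ then $d_{(c_1,c_2)}(G)=d_{(c_1',c_2')}(G)$); for coprime coefficients with $|c_1|,|c_2|>1$ unique factorization does give the injectivity of $(a,b)\mapsto c_1^ac_2^b$, and the residual cases $|c_i|=1$ are handled separately by the $d=1$ theorem. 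Without this reduction your hypothesis check fails for infinitely many pairs.

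Second, and more seriously, the lower bound is never actually established: you do not produce the $(c_1,c_2)$-free set, and the constructions you sketch cannot work. A tower of shape $(1,2)$ or $(2,1)$ only controls the image of a level under \emph{one} of $T_{c_1},T_{c_2}$; the image under the other map leaves the tower entirely, so a single level of a height-$2$ tower need not be $(c_1,c_2)$-free, and your heuristic ``half of a tower split into two disjoint levels'' does not give disjointness of $T_{c_1}(A)$ and $T_{c_2}(A)$. To control both images you need the tower to have width at least $2$ in both coordinates, but then one level has measure only about $1/4$. The paper's resolution is to take an elongated $(t,2)$-tower with base $A$ and set $B=\bigsqcup_{j=1}^{t-1} f_{(j,1)}^{-1}(A)$, the union of $t-1$ levels of the ``top row'': then $T_{c_1}(B)\subset\bigcup_{j=0}^{t-2}f_{(j,1)}^{-1}(A)$ and $T_{c_2}(B)\subset\bigcup_{j=1}^{t-1}f_{(j,0)}^{-1}(A)$ are disjoint by admissibility, while $\mu(B)\approx\frac{t-1}{2t}\to 1/2$. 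This asymmetric choice of tower and the union over many levels is the essential combinatorial content of the lower bound, and it is exactly the step your proposal leaves unresolved.
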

\begin{proof}
We can suppose that $c_1,c_2$ are coprime; indeed, given a common divisor $\ell$, with $c_i=\ell c_i'$ for $i=1,2$, we have by the divisibility of $G$ that $d_{(c_1,c_2)}(G)= d_{(c_1',c_2')}(G)$.\\
\indent Let us suppose first that both $|c_1|,|c_2|$ are greater than 1. Then by unique factorization, for every distinct elements $m,n\in  \N_0^2$, we have $c_1^{m(1)}c_2^{m(2)}\neq c_1^{n(1)}c_2^{n(2)}$, and it follows that
\[
\mu\big(\{x\in G: c_1^{m(1)}c_2^{m(2)}x = c_1^{n(1)}c_2^{n(2)} x\}\big)= \mu\big(T_{c_1^{m(1)}c_2^{m(2)}-c_1^{n(1)}c_2^{n(2)}}^{-1}\{0\}\big)=0.
\]
We can therefore apply Theorem \ref{thm:Rokhlin} to the free action $f$ of $\N_0^2$ on $G$, where $f_n(x)=T_{c_1}^{n(1)}\circ T_{c_2}^{n(2)}\,x$. Fix $\delta>0$, and apply the theorem with $\epsilon=\delta/2$ and $N=(t,2)$ with $t>1/\delta$. Let $A$ be the $N$-admissible set given by the theorem, with $\mu(A_{(N)})\geq 1-\delta/2$. Now let
\begin{equation}\label{eq:Bset}
B=\bigsqcup_{j\in [t-1]} (T_{c_1}^j\circ T_{c_2})^{-1}(A).
\end{equation}
(Recall that $[t-1]=\{1,\ldots,t-1\}$.) We have $\mu(B)\geq (1-\delta/2)/2-1/(2t)\geq 1/2-\delta $. We also have $T_{c_1}(B)\cap T_{c_2}(B)=\emptyset$. Hence $d_{(c_1,c_2)}(G)\geq 1/2-\delta$. Since $\delta$ was arbitrary, the result follows.\\
\indent When one of $|c_1|,|c_2|$ equals 1, the case $d=1$ of Theorem \ref{thm:Rokhlin} implies immediately that $d_{(c_1,c_2)}(G)=1/2$.
\end{proof}
\begin{remark}
The supremum $1/2$ in Proposition \ref{prop:2vareqs} need not be attainable. This was already observed in \cite{Fiz} in the case $c_1=1$, $|c_2|>1$: for instance, for $G=\T$, attainment of this supremum would contradict the ergodicity of $T_{c_2^2}$.
\end{remark}
\noindent In the proof above, Theorem \ref{thm:Rokhlin} is used to reduce the problem to that of finding a set $S$ of maximal density inside a rectangle in $\Z^2$ such that some translates of $S$ are disjoint, namely the translates $S-(1,0)$ and $S-(0,1)$. In the next subsection we elaborate on this use of Theorem \ref{thm:Rokhlin} to obtain a more general connection between two very natural problems.
\subsection{Sets with some disjoint images, and a problem of Motzkin}
\noindent The first problem in question here concerns general free measure-preserving actions of $\N_0^d$. 

\begin{defn}
Let $f$ be a free action of $\N_0^d$ on $(X,\mathcal{X},\mu)$, let $V$ be a finite subset of $\N_0^d$, and let $\Gamma$ be a graph with vertex set $V$. We say that a measurable set $A\in\cX$ is $\Gamma$-\emph{admissible for} $f$ if for every edge $u\,v$ in $\Gamma$ we have $f_u(A)\cap f_v(A)=\emptyset$. We define
\[
d_\Gamma(X,f) = \sup \big\{\mu(A): A\textrm{ is $\Gamma$-admissible for }f\big\}.
\]
\end{defn}
\noindent The general problem consists in determining $d_\Gamma(X,f)$. This includes the following problem, which extends the one treated in the previous subsection.
\begin{example}[Avoiding several 2-variable equations]\label{ex:2vareqsfam}
Let $m_1,m_2,\ldots,m_d$ be multiplicatively independent non-zero integers. Given a polish divisible compact abelian group $G$, for each $n\in \N_0^d$ let $f_n:G\to G$, $x\mapsto m_1^{n(1)}\cdots m_d^{n(d)} x$. One checks from the definitions that these maps form a free measure-preserving action $f$ of $\N_0^d$ on $(G,\cB_G,\mu)$. Now let $\mathcal{F}$ be a finite family of 2-variable  equations $c_i x_1=c_j x_2$ with non-zero integer coefficients $c_i$, in which every coefficient is of the (unique) form $m_1^{v(1)}\cdots m_d^{v(d)}$ for some $v\in \N_0^d$. Let $V$ be the subset of $\N_0^d$ formed by these elements $v$, and let $\Gamma$ be the graph on $V$ defined by letting $u\,v$ be an edge if and only if the equation $m_1^{u(1)}\cdots m_d^{u(d)}\;x_1=m_1^{v(1)}\cdots m_d^{v(d)}\;x_2$ is in $\mathcal{F}$. Call a set $A\subset G$ an $\mathcal{F}$\emph{-free set} if there are no solutions in $A^2$ to any of the equations in $\mathcal{F}$. The problem is to determine $d_{\mathcal{F}}(G):=\sup\{\mu(A):A\in \cB_G\textrm{ is $\mathcal{F}$-free}\}$. Note that $d_{\mathcal{F}}(G)=d_\Gamma(G,f)$.
\end{example}

\noindent Recall that for $N\in \N^d$ we denote the product $N(1)\cdots N(d)$ by $\pi(N)$. 

\begin{defn}
Let $\Gamma$ be a graph on some finite subset $V$ of $\N_0^d$. We say that a set $S\subset \Z^d$ is $\Gamma$\emph{-admissible} if for every edge $u\,v$ of $\Gamma$ the translates $S-u$, $S-v$ are disjoint. We define
\[
M_\Gamma(N)=\max \Big\{|S|/\pi(N): S\subset \prod_{i=1}^d\big[0,N(i)\big)\textrm{ is $\Gamma$-admissible} \Big\}.
\]
\end{defn}
\noindent Theorem \ref{thm:Rokhlin} enables us to relate $d_\Gamma(X,f)$ and $M_\Gamma(N)$ as follows. 
\begin{proposition}\label{prop:latticereduction}
Let $\Gamma$ be a graph on some finite subset $V$ of $\N_0^d$, let $m\in \N^d$ be such that $m>v$ for all $v\in V$, and let $N\in \N^d$. Then for every free action $f$ of $\N_0^d$ on $(X,\cX,\mu)$, we have
\begin{equation}\label{eq:latticebound}
\frac{\pi(N)}{\pi(N+m)}\;M_\Gamma(N)\;  \leq\; d_\Gamma(X,f)\; \leq \; M_\Gamma(N) + \sum_{i\in [d]} \frac{1}{N(i)}.
\end{equation}
\end{proposition}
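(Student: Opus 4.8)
The plan is to use Theorem~\ref{thm:Rokhlin} as a bridge between the dynamics of $f$ and the combinatorics of $\Z^d$: a Rokhlin tower of suitable height makes $X$ look, up to a set of small measure, like the base of the tower times the box $\prod_i[0,N(i))$, and in that model the condition defining $d_\Gamma(X,f)$ and the condition defining $M_\Gamma(N)$ become the same condition on subsets of the box. (A free action of $\N_0^d$ on a probability space automatically lives on an atomless space — an atom would have an infinite forward orbit of points each of at least its mass — so Theorem~\ref{thm:Rokhlin} applies to $f$ in any height.)

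\emph{Lower bound.} Fix $\epsilon>0$ and a $\Gamma$-admissible set $S\subset\prod_i[0,N(i))$ with $|S|=M_\Gamma(N)\,\pi(N)$. By Theorem~\ref{thm:Rokhlin} applied with height $N+m$ there is a base $B\in\cX$ whose $(N+m)$-tower $B_{(N+m)}$ has measure at least $1-\epsilon$; as its $\pi(N+m)$ pieces $f_k^{-1}(B)$ are pairwise disjoint, each has measure $\mu(B)=\mu(B_{(N+m)})/\pi(N+m)\ge(1-\epsilon)/\pi(N+m)$. Set $A=\bigsqcup_{s\in S}f_{s+m}^{-1}(B)$; the indices $s+m$ are distinct and lie in $\prod_i[m(i),N(i)+m(i))$, so this is a disjoint union of tower pieces and $\mu(A)=|S|\,\mu(B)\ge\frac{\pi(N)}{\pi(N+m)}M_\Gamma(N)(1-\epsilon)$. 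To check that $A$ is $\Gamma$-admissible for $f$, take an edge $uv$; since $u,v<m$ we have $u<s+m$ for each $s\in S$, so the action identity $f_{s+m}=f_{s+m-u}\circ f_u$ gives $f_u\big(f_{s+m}^{-1}(B)\big)\subset f_{s+m-u}^{-1}(B)$, and similarly for $v$. Hence $f_u(A)\cap f_v(A)\subset\bigcup_{s,s'\in S}\big(f_{s+m-u}^{-1}(B)\cap f_{s'+m-v}^{-1}(B)\big)$, and as all these indices again lie in $\prod_i[0,N(i)+m(i))$, such an intersection is nonempty only if $s+m-u=s'+m-v$, i.e.\ $s-u=s'-v$ — impossible since $S-u$ and $S-v$ are disjoint. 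Letting $\epsilon\to0$ yields the left inequality.

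\emph{Upper bound.} Fix $\epsilon>0$ and a set $A$ that is $\Gamma$-admissible for $f$. By Theorem~\ref{thm:Rokhlin} with height $N$ there is a base $B$ with $X':=B_{(N)}=\bigsqcup_{0\le k<N}f_k^{-1}(B)$ of measure at least $1-\epsilon$, each piece of measure $\mu(B)=\mu(X')/\pi(N)\le 1/\pi(N)$. Writing $\mathbf 1=(1,\dots,1)$, let $C:=f_{N-\mathbf 1}^{-1}(B)$ be the top piece; for $y\in C$ the orbit points $f_j(y)$, $0\le j\le N-\mathbf 1$, lie one in each piece, with $f_j(y)\in f_{N-\mathbf 1-j}^{-1}(B)$. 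Define the trace $S(y)=\{k\in\prod_i[0,N(i)):f_{N-\mathbf 1-k}(y)\in A\}$. Since $C\cap f_{N-\mathbf 1-k}^{-1}(A)=f_{N-\mathbf 1-k}^{-1}\big(f_k^{-1}(B)\cap A\big)$ and $f_{N-\mathbf 1-k}$ preserves $\mu$, integrating gives $\int_C|S(y)|\,\ud\mu(y)=\sum_k\mu\big(A\cap f_k^{-1}(B)\big)=\mu(A\cap X')\ge\mu(A)-\epsilon$. Now $S(y)$ is $\Gamma$-admissible for every $y$: if $k,k'\in S(y)$ and $k-u=k'-v=:w$ in $\Z^d$ for an edge $uv$, then $N-\mathbf 1-w=N-\mathbf 1-k+u=N-\mathbf 1-k'+v$ has non-negative coordinates (using $k,k'\le N-\mathbf 1$), so the point $f_{N-\mathbf 1-w}(y)=f_u\big(f_{N-\mathbf 1-k}(y)\big)=f_v\big(f_{N-\mathbf 1-k'}(y)\big)$ lies in $f_u(A)\cap f_v(A)=\emptyset$, a contradiction. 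Therefore $|S(y)|\le M_\Gamma(N)\,\pi(N)$ for all $y\in C$, and $\mu(A)-\epsilon\le\int_C|S(y)|\,\ud\mu\le M_\Gamma(N)\,\pi(N)\,\mu(B)\le M_\Gamma(N)$. Letting $\epsilon\to 0$ gives $d_\Gamma(X,f)\le M_\Gamma(N)$, which implies (indeed is stronger than) the right inequality.

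\emph{Main obstacle.} The one genuinely delicate point is the faithful transfer of the set-theoretic condition on $A$ to the lattice condition on the trace in the presence of non-invertibility. Reading the trace off the \emph{top} of the tower is exactly what makes this work: for that reference point the relevant witness $f_{N-\mathbf 1-w}(y)$ has a non-negative index even when $w$ has negative coordinates, so the emptiness of $f_u(A)\cap f_v(A)$ can be used with no restriction near the boundary of the box. (Reading the trace off the base and trying to iterate $f$ upward fails here, and would cost a boundary layer of total measure $O(\sum_i 1/N(i))$ — the origin of the error term in a less careful version of the argument.) In the lower bound the corresponding subtlety is that the box set $S$ does not wrap around, which is why the tower is taken of height $N+m$ rather than $N$ and $S$ is embedded at offset $m$, so that every $f_u$ with $u$ an edge-endpoint keeps the relevant pieces inside the tower; this is where the factor $\pi(N)/\pi(N+m)$ comes from.
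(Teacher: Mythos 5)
Your proof is correct. The lower bound is essentially the paper's argument verbatim: the same $(N+m)$-tower, the same embedding of $S$ at offset $m$, and the same disjointness check via $f_u(f_{s+m}^{-1}(B))\subset f_{s+m-u}^{-1}(B)$. The upper bound follows the same overall strategy (transfer along a Rokhlin $N$-tower plus averaging) but is anchored differently, and the difference is worth noting. The paper fixes a point $x^*$ in the deepest preimage $f_N^{-1}(B)$ and counts visits $f_n(x^*)\in A$ for $0<n<N$; the levels indexed by $n$ with some coordinate $n(i)=0$ must be discarded because the corresponding indices $N-n$ fall outside the range $\prod_i[0,N(i))$ of tower levels, and this boundary layer is precisely the source of the error term $\sum_{i}1/N(i)$ in \eqref{eq:latticebound}. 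You instead read the trace from the level $f_{N-\mathbf 1}^{-1}(B)$ via the maps $f_{N-\mathbf 1-k}$, $0\le k<N$, which enumerate all $\pi(N)$ levels with no mismatch, and you bound $|S(y)|$ pointwise on that level rather than extracting a single good point; your verification that each trace $S(y)$ is $\Gamma$-admissible (using that $N-\mathbf 1-k+u\in\N_0^d$) is sound. The payoff is the cleaner inequality $d_\Gamma(X,f)\le M_\Gamma(N)$ with no error term, which is strictly stronger than the stated right-hand bound and still consistent with the left-hand one and with Corollary \ref{cor:effect}. Your preliminary observation that freeness forces the space to be atomless, so that Theorem \ref{thm:Rokhlin} applies, is a point the paper leaves implicit, and your justification of it is correct.
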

 \noindent As a consequence we have the following result, to the effect that $d_\Gamma(X,f)$ does not depend on $X,f$.
\begin{corollary}\label{cor:effect}
Let $\Gamma$ be a graph on some finite subset $V$ of $\N_0^d$. Then for every free action $f$ of $\N_0^d$ on a space $(X,\cX,\mu)$, we have that $M_\Gamma (N_j)$ converges to $d_\Gamma(X,f)$ for every sequence $(N_j)$ of elements of $\N^d$ satisfying $\min_{i\in [d]}N_j(i)\to\infty$ as $j\to\infty$.
\end{corollary}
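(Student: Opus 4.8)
The plan is to deduce Corollary \ref{cor:effect} directly from the two-sided bound \eqref{eq:latticebound} in Proposition \ref{prop:latticereduction}, which already does all the substantive work. The key point is that the left- and right-hand sides of \eqref{eq:latticebound} both converge to $d_\Gamma(X,f)$ along any sequence $(N_j)$ with $\min_{i\in[d]}N_j(i)\to\infty$, and that $M_\Gamma$ is sandwiched between them.

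First I would fix a free action $f$ of $\N_0^d$ on $(X,\cX,\mu)$, a graph $\Gamma$ on a finite $V\subset\N_0^d$, and choose once and for all some $m\in\N^d$ with $m>v$ for all $v\in V$ (such an $m$ exists since $V$ is finite). Then I would apply Proposition \ref{prop:latticereduction} with $N=N_j$ for each $j$. The upper bound gives $M_\Gamma(N_j)\geq d_\Gamma(X,f)-\sum_{i\in[d]}1/N_j(i)$, and the hypothesis $\min_i N_j(i)\to\infty$ forces $\sum_{i\in[d]}1/N_j(i)\to 0$, so $\liminf_j M_\Gamma(N_j)\geq d_\Gamma(X,f)$. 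For the reverse inequality I would use the lower bound: $M_\Gamma(N_j)\leq \frac{\pi(N_j+m)}{\pi(N_j)}\,d_\Gamma(X,f)$. Here $\frac{\pi(N_j+m)}{\pi(N_j)}=\prod_{i\in[d]}\bigl(1+m(i)/N_j(i)\bigr)$, and since each factor tends to $1$ (as $N_j(i)\to\infty$) and there are only $d$ of them, this product tends to $1$. Hence $\limsup_j M_\Gamma(N_j)\leq d_\Gamma(X,f)$. Combining the two gives $\lim_j M_\Gamma(N_j)=d_\Gamma(X,f)$.

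One small point worth spelling out: in the lower-bound factor one needs $d_\Gamma(X,f)<\infty$, which is immediate since $d_\Gamma(X,f)\leq\mu(X)=1$; and one should note $d_\Gamma(X,f)\geq 0$ trivially (the empty set is $\Gamma$-admissible), so the squeeze is unproblematic even in the degenerate case $d_\Gamma(X,f)=0$. There is no real obstacle here — the corollary is a clean limiting consequence of the proposition, and the only thing to be careful about is that the multiplicative error factor $\pi(N_j+m)/\pi(N_j)$ is controlled because $d$ is fixed while each $N_j(i)$ grows, so the finite product of near-$1$ terms stays near $1$. I would present this as a short two-sentence proof invoking \eqref{eq:latticebound} and the squeeze theorem.
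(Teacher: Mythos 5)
Your proof is correct and is exactly the intended argument: the paper states the corollary as an immediate consequence of the two-sided bound \eqref{eq:latticebound}, and your squeeze argument (the additive error $\sum_i 1/N_j(i)$ on one side and the multiplicative factor $\pi(N_j+m)/\pi(N_j)\to 1$ on the other) is the standard way to fill it in.
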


\begin{proof}[Proof of Proposition \ref{prop:latticereduction}]
We begin with the inequality on the left in \eqref{eq:latticebound}. Given an arbitrary $\epsilon>0$, we apply Theorem \ref{thm:Rokhlin} to obtain an $(N+m)$-tower for $f$ with base $B$ and measure at least $1-\epsilon$. Let $S\subset \prod_{i\in [d]} \big[0,N(i)\big)$ be $\Gamma$-admissible with $|S|= M_\Gamma(N)\,\pi(N)$, and let $A=\bigsqcup_{n\in S+m} f_n^{-1}(B)$. Since the shifted sets $S+m-v$, $v\in V$, are all contained in $\prod_{i\in [d]} \big[0,N(i)+m(i)\big)$, and $(S+m-u)\cap (S+m-v)=\emptyset$ for every edge $uv$ in $\Gamma$, the set $A$ must be $\Gamma$-admissible for $f$. Hence
\[
d_\Gamma(X,f)\geq \mu(A)\geq (1-\epsilon)\frac{ |S|}{\pi(N+m)}\geq M_\Gamma(N)\,\frac{\pi(N)}{\pi(N+m)}-\epsilon.
\]
Letting $\epsilon \to 0$, the inequality follows.\\
\indent For the inequality on the right in \eqref{eq:latticebound}, fix again an arbitrary $\epsilon>0$, and let $A\in \cX$ be $\Gamma$-admissible for $f$ with $\mu(A)\geq d_\Gamma(X,f)-\epsilon/2$. Theorem \ref{thm:Rokhlin} gives us a base $B$ of an $N$-tower for $f$ of measure at least $1-\epsilon/2$. We define the following measurable function on $X$:
\[
F(x) = \sum_{0< n< N} 1_A(f_n(x)).
\]
The integral of $F$ over $f_N^{-1}(B)$ equals roughly $\mu(A)$. Indeed, we have
\begin{eqnarray*}
\int_{f_N^{-1}(B)} F(x)\ud\mu(x) &  =  & \sum_{0<n< N} \int_X 1_A(f_n(x))\, 1_B(f_{N-n}\circ f_n  (x)) \ud\mu(x)\\
&  =  & \sum_{0< n< N} \mu\big(A \cap f_{N-n}^{-1} (B)\big)\\
& \geq & \mu(A\cap B_{(N)}) - \pi(N)^{-1}\,|\{0\leq n<N:\exists\, i\in [d],\; n(i)=0\}|\\
& \geq  & \mu(A)-\frac{\epsilon}{2}-\sum_{i\in [d]} \frac{1}{N(i)}.
\end{eqnarray*}
Since $\mu(f_N^{-1}(B))\leq \pi(N)^{-1}$, we conclude that there exists $x^*\in f_N^{-1}(B)$ such that
\[
\pi(N)^{-1} F(x^*)\geq \mu(A)-\frac{\epsilon}{2}-\sum_{i\in [d]} \frac{1}{N(i)}.
\]
We now set
\[
S:=\{n : 0<n<N,\; f_n(x^*) \in A\}.
\]
Suppose that for some edge $u\,v$ in $\Gamma$ we had $(S-u)\cap (S-v)\neq\emptyset$, so that $n_1-u=n_2-v$ for some $n_1,n_2\in S$. Let $x_1=f_{n_1}(x^*) \in A$ and $x_2=f_{n_2}(x^*) \in A$. Then we have $f_u(x_2)=f_{n_2+u}(x^*)= f_{n_1+v}(x^*)=f_v(x_1)$, so $f_u(A)\cap f_v(A)\neq\emptyset$, a  contradiction. Therefore $S$ is $\Gamma$-admissible. We then have
\[
M_\Gamma(N)\geq |S|/\pi(N)=F(x^*)/\pi(N)\geq d_\Gamma(X,f)-\epsilon-\sum_{i\in [d]} \frac{1}{N(i)}.
\]
Letting $\epsilon \to 0$, the result follows.
\end{proof}
\noindent From now on we shall write $d_\Gamma$ for this quantity $d_\Gamma (X,f)=\lim_{j\to\infty}  M_\Gamma(N_j)$.

Given a set $V$ and a graph $\Gamma$ on $V$ as above, consider the (symmetric)  partial difference set of $V$ along $\Gamma$, that is the set\footnote{ This is a version for graphs of the partial difference set $V\stackrel{\Gamma}- V$ defined for bipartite graphs in \cite[\S 2.5]{T-V}.}
\[
D=\{u-v,\; v-u\,:\,u,v\in V,\; u\,v\textrm{ an edge of }\Gamma\}\subset \Z^d.
\]
A set $A\subset \Z^d$ is $\Gamma$-admissible if and only if the difference set $A-A$ is disjoint from $D$. Writing $\overline{\delta}(A)$ for the \emph{upper density} of $A$, that is $\overline{\delta}(A)= \limsup_{r\to \infty} |A\cap [-r,r)^d|/(2r)^d$, a straightforward argument shows that $d_\Gamma= \sup\{\overline{\delta}(A):A\subset \Z^d,\; (A-A) \cap D=\emptyset\}$.\\
\indent The general problem of determining the supremum of upper densities of sets $A\subset \Z^d$ with differences avoiding a given finite set goes back to Motzkin (who posed it originally for sets $A\subset \N$; see \cite{Cantor&G}). This problem is vast and we shall not explore it fully here. However, we shall give an estimate for $d_\Gamma$ for a family of graphs which, thanks to the connection with the quantities $d_\mathcal{F}(G)$ established in Example \ref{ex:2vareqsfam}, will yield in particular a nontrivial generalization of Proposition \ref{prop:2vareqs}, namely Proposition \ref{cor:bip2vareqs} below.\\
\indent The family just mentioned involves the graphs $\Gamma$ that have as vertex set $\{0,e_1,\ldots,e_d\}$, where the elements $e_i$ form the standard basis of $\R^d$.
\subsubsection{Estimation of $d_\Gamma$ for graphs $\Gamma$ on $\{0,e_1,\ldots,e_d\}$.}
\noindent Our aim here is to give bounds for $d_\Gamma$ in terms of known graph parameters. To this end, we first express $d_\Gamma$ as a natural quantity on the circle group $\T$.
\begin{defn}
Let $\Gamma$ be a finite graph, with vertex set $V$. We say that a Borel set $A\subset \T$ is a \emph{coloring base} for $\Gamma$ if there exists a map $\varphi:V\to \T$ such that for every edge $uv$ of $\Gamma$ we have $(A+\varphi(u)) \cap (A+\varphi(v))=\emptyset$. We denote by $\sigma_\T(\Gamma)$ the supremum over all probability Haar measures of coloring bases for $\Gamma$.
\end{defn}

\begin{lemma}\label{lem:sig=d}
For every graph $\Gamma$ on $V=\{0,e_1,\ldots,e_d\}$ we have $d_\Gamma=\sigma_\T(\Gamma)$.
\end{lemma}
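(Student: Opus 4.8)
The plan is to prove the two inequalities $d_\Gamma \leq \sigma_\T(\Gamma)$ and $\sigma_\T(\Gamma) \geq d_\Gamma$ separately, by passing between the combinatorial quantity $M_\Gamma(N)$ (which by Corollary \ref{cor:effect} computes $d_\Gamma$ in the limit) and coloring bases on $\T$. The key observation linking the two is that a $\Gamma$-admissible set $S\subset \prod_i [0,N(i))$, when read off along each coordinate direction $e_i$, records exactly when the translate "by $e_i$" collides with the untranslated copy; and a translation of $\T$ by $\varphi(e_i)$ plays the same role once we discretize $\T$ into $\prod_i \Z_{N(i)}$ via a suitable rational rotation. So the bridge in both directions is the homomorphism $\Z^d \to \prod_{i} \Z/N(i)\Z$ and the identification of $\prod_i \Z/N(i)\Z$ with $\pi(N)$ equally-spaced points on $\T$.

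For $d_\Gamma \leq \sigma_\T(\Gamma)$: fix a large $N=(N(1),\dots,N(d))$ and take a $\Gamma$-admissible $S\subset \prod_i[0,N(i))$ with $|S|/\pi(N)$ close to $d_\Gamma$. First I would reduce to the case where $S$ is "periodic modulo $N$", i.e. closed under adding any $N(i)e_i$ — this is automatic since $S$ already lives in the box, but the point is to view $S$ as a subset $\overline S$ of the group $H := \prod_i \Z/N(i)\Z$; the $\Gamma$-admissibility $(S-e_i)\cap(S-e_j)=\emptyset$ for edges $e_ie_j$, and $(S-0)\cap(S-e_i)=\emptyset$ for edges $0e_i$, transfers to $(\overline S + e_i)\cap(\overline S+e_j)=\emptyset$ in $H$ (here I must check the box constraint in Proposition \ref{prop:latticereduction} guarantees no wraparound issues spoil this — in fact one works directly in $H$ where wraparound is harmless because the defining disjointness is a statement purely about the image). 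Then embed $H$ into $\T$ by sending the generator of $\Z/N(i)\Z$ to a point so that the whole of $H$ maps to $\pi(N)$ equispaced points; concretely, pick coprime-to-everything integers so that $H \hookrightarrow \T$ as $\{0,1/\pi(N),\dots,(\pi(N)-1)/\pi(N)\}$ and the basis vector $e_i$ maps to some $\varphi(e_i)\in\T$. Let $A\subset\T$ be the union of arcs of length slightly less than $1/\pi(N)$ centered at the images of $\overline S$. Then $(A+\varphi(e_i))\cap(A+\varphi(e_j))=\emptyset$ for every edge, and $\mu(A)$ is within $O(1/\pi(N))$ of $|S|/\pi(N)$. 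Letting $N$ grow along a sequence with $\min_i N(i)\to\infty$ and invoking Corollary \ref{cor:effect} gives $\sigma_\T(\Gamma)\geq d_\Gamma$.

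For $\sigma_\T(\Gamma)\leq d_\Gamma$, hence the reverse: start from a coloring base $A\subset\T$ with map $\varphi:V\to\T$ and $\mu(A)$ close to $\sigma_\T(\Gamma)$. Approximate each $\varphi(e_i)$ by a rational $p_i/q$ with common denominator $q$, and approximate $A$ from inside by a finite union of intervals with endpoints in $\frac1{Q}\Z$ for a large multiple $Q$ of $q$; the disjointness $(A+\varphi(u))\cap(A+\varphi(v))=\emptyset$ survives a small perturbation because $A$ is (up to null sets) at positive distance from its translate-collisions — more carefully, I would shrink $A$ slightly first so that the disjointness is "open" and thus stable under replacing $\varphi$ by its rational approximant and $A$ by an interval approximant. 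Now the discretized picture lives on $\Z/Q\Z$, i.e. on a one-dimensional cyclic group, and translation by $p_i/q$ becomes translation by a fixed element $g_i\in\Z/Q\Z$. Pull this back along a surjection $\Z^d\to\Z/Q\Z$ sending $e_i\mapsto g_i$: the preimage $S_0$ of the discretized $A$ is a $\Gamma$-admissible subset of $\Z^d$ whose upper density equals the discretized measure of $A$. By the remark in the text that $d_\Gamma=\sup\{\overline\delta(A):A\subset\Z^d,\,(A-A)\cap D=\emptyset\}$, this gives $d_\Gamma\geq\mu(A)-o(1)$, and letting the approximations tighten yields $d_\Gamma\geq\sigma_\T(\Gamma)$.

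The main obstacle I anticipate is the bookkeeping of the perturbation arguments: on the $\T$-to-$\Z^d$ side one must ensure that rationally approximating the translation vectors $\varphi(e_i)$ and interval-approximating $A$ does not destroy the disjointness of translates, which requires first replacing $A$ by a slightly smaller compact set so that the collision conditions hold with room to spare, and then choosing the approximation scale small relative to that room; this is routine but needs to be stated with enough care that the measure loss is quantified and shown to vanish. The other delicate point is making sure the box-size hypothesis $m>v$ for all $v\in V$ in Proposition \ref{prop:latticereduction} (here $V=\{0,e_1,\dots,e_d\}$, so $m=(2,\dots,2)$ works) is what allows one to pass freely between $\Gamma$-admissible sets in a box and in the quotient group $H$ without wraparound artifacts — one should spell out that the disjointness conditions, being conditions on translates by the $e_i$ and $0$, only ever compare points differing by a single unit vector, so reduction modulo $N$ is harmless.
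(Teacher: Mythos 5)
Your overall strategy (pass through $M_\Gamma(N)$ via Corollary \ref{cor:effect}, discretize $\T$ in one direction and periodize/pull back in the other) is genuinely different from the paper's. The paper exploits the freedom in Corollary \ref{cor:effect} more directly: it realizes the free $\N_0^d$-action as translations of $\T$ itself by $\Q$-independent $\alpha_1,\dots,\alpha_d$, so that a $\Gamma$-admissible set for \emph{that} action is literally a coloring base (giving $d_\Gamma\le\sigma_\T(\Gamma)$ with no discretization at all), and for the converse it perturbs a given $\varphi$ to such independent translations via Kronecker's theorem, controlling the resulting small overlaps and deleting them. Your route can be made to work, but it carries more bookkeeping, and one of the steps you wave through is actually false as stated.

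The genuine gap is in the direction $\sigma_\T(\Gamma)\ge d_\Gamma$: you assert that the $\Gamma$-admissibility of $S\subset\prod_i[0,N(i))$ transfers to its image $\overline S$ in $H=\prod_i\Z/N(i)\Z$ because ``wraparound is harmless.'' It is not. Admissibility of $S$ says $u-v\notin S-S$ for edges $uv$, with differences computed in $\Z^d$; admissibility of $\overline S$ requires $u-v\notin S-S \pmod N$, which is strictly stronger. Concretely, with $d=1$, $\Gamma$ the single edge $0\,e_1$, $N=3$ and $S=\{0,2\}$: then $(S-0)\cap(S-e_1)=\emptyset$ in $\Z$, yet $\overline S\cap(\overline S+1)\ni 0$ in $\Z/3\Z$. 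Your closing remark that the conditions ``only ever compare points differing by a single unit vector'' does not rescue this, since the collision above is exactly by a single unit vector after reduction. The standard fix is to first trim a boundary layer: replace $S$ by $S'=S\cap\prod_i[0,N(i)-1)$, which costs density at most $\sum_i 1/N(i)$; then any congruence $s_1-s_2\equiv u-v\pmod N$ with $s_1,s_2\in S'$ and $\|u-v\|_\infty\le 1$ forces the actual equality $s_1-s_2=u-v$ (each coordinate of $s_1-s_2$ has absolute value at most $N(i)-2$, while a nontrivial wraparound would force absolute value at least $N(i)-1$), so $\overline{S'}$ is admissible in $H$. You should state this trimming explicitly. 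Two smaller points you should also nail down: the embedding $H\hookrightarrow\T$ as equispaced points requires $H$ cyclic, so you must choose the $N(i)$ pairwise coprime along your sequence (Corollary \ref{cor:effect} permits this); and in the reverse direction the homomorphism $\Z^d\to\Z/Q\Z$, $e_i\mapsto g_i$, must be surjective for the density of the pullback to equal $|\overline A|/Q$ — otherwise the pullback density is the relative density of $\overline A$ in the image subgroup, which can be smaller. With these repairs (and the compact-inner-approximation argument you sketch to make the disjointness stable under perturbation, which is fine), the proof goes through.
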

\begin{proof}
Let us view $\T$ as $[0,1)$ with addition mod 1.\\
\indent To see that $d_\Gamma\leq \sigma_\T(\Gamma)$, let $\alpha_1,\dots,\alpha_d$ be real numbers in $[0,1)$ such that $1,\alpha_1,\dots,\alpha_d$ are independent over $\Q$. This implies that for every distinct $m,n\in \N_0^d$ we have
\[
m(1)\alpha_1+\cdots+m(d)\alpha_d\neq n(1)\alpha_1+\cdots+n(d)\alpha_d\mod 1.
\]
Therefore the maps $f_n:\T\to\T$, $x\mapsto n(1)\alpha_1+\cdots+n(d)\alpha_d+x$ form a free $\N_0^d$-action $f$ on $X=(\T,\mu)$. Corollary \ref{cor:effect} implies that $d_\Gamma=d_\Gamma(X,f)$. By definition we then have $d_\Gamma(X,f)\leq \sigma_\T(\Gamma)$. Indeed, if $A\subset \T$ is $\Gamma$-admissible for $f$ then let $\varphi:V\to \T$ be defined by $\varphi(0)=0$, $\varphi(e_i)=\alpha_i$.\\
\indent To see that $\sigma_\T(\Gamma)\leq d_\Gamma$, let us write $v_0=0$, $v_i=e_i$ for $i\in [d]$, and fix any $\epsilon>0$. Let $A\subset \T$ be a Borel set with $\mu(A)\geq \sigma_\T(\Gamma)-\epsilon/4$ and let $\varphi:V\to \T$ be such that $(A+\varphi(v_i)) \cap (A+\varphi(v_j))=\emptyset$ for every edge $v_iv_j$ of $\Gamma$. We may assume that $\varphi(v_0)=0$. By standard properties of the Lebesgue measure, for some $\ell\in \N$ there exists a set $A'\subset \T$ that is a union of some intervals of the form $[(j-1)/\ell,j/\ell)$ with $j\in [\ell]$, such that $\mu(A\Delta A')\leq\epsilon/ (8 e(\Gamma))$ (where $e(\Gamma)$ is the number of edges of $\Gamma$). Letting $A_i=A+\varphi(v_i)$, $A_i'=A'+\varphi(v_i)$, we have $\mu(A_i'\cap A_j')\leq \mu(A_i\cap A_j)+\mu(A_i'\Delta A_i)+\mu(A_j'\Delta A_j) \leq \epsilon/(4 e(\Gamma))$ for every edge $v_iv_j$. Now let $\alpha_1,\dots,\alpha_d$ be real numbers in $[0,1)$ such that $\alpha_1,\dots,\alpha_d,1$ are independent over $\Q$ and such that $|\alpha_i-\varphi(e_i)|\leq \epsilon/(16\,\ell e(\Gamma))$ for every $i\in [d]$. To see that such $\alpha_i$ exist, note first that if we fix any $\beta_1,\ldots,\beta_d \in [0,1)$ such that $1,\beta_1,\dots,\beta_d$ are independent over $\Q$, then for any positive integer $n$ the elements $n\beta_1 \mod 1,\ldots,n\beta_d \mod 1 \in [0,1)$ and 1 are also independent.  Moreover, the orbit $\{(n \beta_1,\dots, n\beta_d):n\in \N\}$ is dense in $\T^d$, by Kronecker's theorem. Hence we can set $\alpha_i=n\beta_i$ for some $n$. Letting $\alpha_0=0$, we have  $\mu\big( (A'+\alpha_i) \Delta A_i'\big)\leq 2\ell\, |\alpha_i-\varphi(e_i)|\leq \epsilon/(8e(\Gamma))$ for each $v_i\in V$. We deduce that $\mu\big( (A'+\alpha_i) \cap (A'+\alpha_j)\big)\leq \epsilon/(2e(\Gamma))$ for every edge $v_iv_j$. Removing the unwanted intersections from $A'$, it follows that there is a set $A''\subset A'$ of measure at least $\mu(A')-\epsilon/2$ which is $\Gamma$-admissible for the free $\N_0^d$-action on $\T$ generated by the translations $x\mapsto x+\alpha_i$. Hence $d_\Gamma\geq \sigma_\T(\Gamma)-\epsilon$.
\end{proof}
\indent The \emph{circular chromatic number} (or star-chromatic number) of a finite graph $\Gamma$, denoted $\chi_c(\Gamma)$, is the infimum over all real numbers $q$ such that for each vertex $v$ of $\Gamma$ there exists an open interval  $A_v\subset \T$ with $\mu(A_v)=1/q$, such that $A_u\cap A_v=\emptyset$ for every edge $uv$ in $\Gamma\;$ (see \cite{Zhu}).\\
\indent The \emph{fractional chromatic number} of $\Gamma$, denoted $\chi_f(\Gamma)$, is the infimum over all real numbers $q$ such that for each vertex $v$ there exists a measurable set $A_v\subset [0,1)$ with $\mu(A_v)=1/q$ such that $A_u\cap A_v=\emptyset$ for every edge $uv$ in $\Gamma$.\\
\indent It is a standard fact that $\chi_c(\Gamma),\chi_f(\Gamma)$ are both rational numbers satisfying
\[
\omega(\Gamma)\leq \chi_f(\Gamma)\leq \chi_c(\Gamma) \leq \lceil \chi_c(\Gamma) \rceil = \chi(\Gamma),
\]
where $\omega(\Gamma),\chi(\Gamma)$ are the clique number and chromatic number respectively.\\
\indent From the definitions it follows that for every finite graph $\Gamma$ we have
\begin{equation}\label{eq:graphparams}
1/\chi_c(\Gamma)\leq \sigma_\T(\Gamma)\leq 1/\chi_f(\Gamma).
\end{equation}
\noindent If $\chi_f(\Gamma)=\chi_c(\Gamma)$ then $\Gamma$ is said to be \emph{star-extremal} (see \cite[\S 6]{Zhu}).\\

\indent We deduce the following result, of which Proposition \ref{prop:bip2vareqs} is a special case.
\begin{proposition}\label{cor:bip2vareqs}
Let $c_0=1$, let $c_1,\ldots,c_d$ be multiplicatively independent non-zero integers, let $\Gamma$ be a star-extremal graph on $\{0,1,\ldots,d\}$, and let $\mathcal{F}$ be the family of 2-variable equations $\{c_ix_1=c_jx_2: \; ij\textrm{ an edge of }\Gamma\}$. Then for every polish divisible compact abelian group $G$ we have $d_{\mathcal{F}}(G)=1/\chi_c(\Gamma)= 1/\chi_f(\Gamma)$.
\end{proposition}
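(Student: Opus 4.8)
The plan is to identify $d_{\mathcal{F}}(G)$ with the abstract quantity $d_\Gamma$ and then read off its value from Lemma \ref{lem:sig=d} together with the bounds \eqref{eq:graphparams}.

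First I would put the hypothesis into the framework of Example \ref{ex:2vareqsfam} by taking $m_i=c_i$ for $i\in[d]$; these are multiplicatively independent non-zero integers by assumption, so the monomials $c_1^{v(1)}\cdots c_d^{v(d)}$ are pairwise distinct as $v$ ranges over $\N_0^d$, and in particular the coefficients $c_0=1,c_1,\dots,c_d$ are indexed respectively by $0,e_1,\dots,e_d\in\N_0^d$. Thus the vertex set attached to $\mathcal{F}$ in Example \ref{ex:2vareqsfam} is exactly $\{0,e_1,\dots,e_d\}$, and under the bijection $0\leftrightarrow 0$, $i\leftrightarrow e_i$ the graph on this set obtained from $\mathcal{F}$ is the given star-extremal graph $\Gamma$ (up to this relabelling, which affects none of the invariants below). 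Example \ref{ex:2vareqsfam} then gives $d_{\mathcal{F}}(G)=d_\Gamma(G,f)$ for the free $\N_0^d$-action $f$ on $(G,\cB_G,\mu)$ defined there; since $G$ is polish and compact this is an atomless standard probability space, so Corollary \ref{cor:effect} applies and yields $d_\Gamma(G,f)=d_\Gamma$. Finally Lemma \ref{lem:sig=d}, which is stated precisely for graphs on $\{0,e_1,\dots,e_d\}$, gives $d_\Gamma=\sigma_\T(\Gamma)$.

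It then remains only to evaluate $\sigma_\T(\Gamma)$ for a star-extremal graph. The two-sided estimate \eqref{eq:graphparams} reads $1/\chi_c(\Gamma)\le\sigma_\T(\Gamma)\le 1/\chi_f(\Gamma)$, and star-extremality means $\chi_c(\Gamma)=\chi_f(\Gamma)$, so all three quantities coincide. Chaining the identities gives $d_{\mathcal{F}}(G)=\sigma_\T(\Gamma)=1/\chi_c(\Gamma)=1/\chi_f(\Gamma)$, as desired. I do not expect a genuine obstacle here: the whole argument is an assembly of results already proved, and the only steps requiring care are the bookkeeping ones above, namely verifying that multiplicative independence makes the index set $\{0,e_1,\dots,e_d\}$ well-defined and the associated action free, and that this set is exactly the one for which Lemma \ref{lem:sig=d} is available. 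For Proposition \ref{prop:bip2vareqs} one simply notes in addition that a bipartite graph with at least one edge has $\chi_f=\chi_c=2$, hence is star-extremal, so that $d_{\mathcal{F}}(\T)=1/2$ and a corresponding Borel set $A\subset\T$ with $\mu(A)\geq 1/2-\epsilon$ that is $(c_i,c_j)$-free for every edge $ij$ exists.
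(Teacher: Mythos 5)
Your proposal is correct and follows exactly the paper's own argument: reduce $d_{\mathcal{F}}(G)$ to $d_\Gamma$ via Example \ref{ex:2vareqsfam} and Corollary \ref{cor:effect}, then combine Lemma \ref{lem:sig=d} with \eqref{eq:graphparams} and star-extremality. The extra bookkeeping you spell out (the identification of the vertex set with $\{0,e_1,\dots,e_d\}$ and the freeness of the action) is exactly what the paper leaves implicit.
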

\begin{proof}
Consider the free action $f$ of $\N_0^d$ on $G$ generated by the maps $x\mapsto c_i x$, thus for each $n\in \N_0^d$ we have $f_n:G\to G$, $x\mapsto c_1^{n(1)}\cdots c_d^{n(d)} x$. By the argument from Example \ref{ex:2vareqsfam}, applied here with $V=\{0,e_1,\dots,e_d\}$, we have $d_{\mathcal{F}}(G)=d_\Gamma(G,f)=d_\Gamma$. Combining this with Lemma \ref{lem:sig=d} and \eqref{eq:graphparams}, the result follows.
\end{proof}

\indent We shall not pursue further in this paper the problem of determining $d_\Gamma$ for more general graphs. Let us end with a question related to this problem.\\
\indent A set $S\subset \Z^d$ is said to be \emph{periodic} if there exist linearly independent vectors $v_1,\ldots,v_d \in \Z^d$ such that $S+v_i=S$ for all $i\in [d]$.

\begin{question}
Does there exist, for each graph $\Gamma$ on a finite subset of $\N_0^d$, a  periodic $\Gamma$-admissible set $S\subset \Z^d$ with density equal to $d_\Gamma$?
\end{question}

\noindent Equivalently, does there exist, for each finite set $D\subset \Z^d$, a periodic set $S\subset \Z^d$ with $(S-S)\cap D =\emptyset$ and of density equal to $\nu(D):=\sup\{\overline{\delta}(A):A\subset \Z^d, (A-A)\cap D=\emptyset\}$? (The case $d=1$ is given a positive answer in \cite[Theorem 5]{Cantor&G}.)\\
\indent Note that if $D=Q-Q$ for some finite set $Q\subset \Z^d$ then $\nu(D)\leq 1/|Q|$, with equality if $Q$ tiles $\Z^d$, that is if there exists $S\subset\Z^d$ such that $\Z^d=\bigsqcup_{r\in S} Q+r$. Thus, a question related to the one above is whether, given that a finite subset $Q\subset \Z^d$ tiles $\Z^d$, there must exist a periodic set $S$ such that $\Z^d=\bigsqcup_{r\in S} Q+r$. The \emph{periodic tiling  conjecture} posits an affirmative answer to the latter question (see \cite{L&W}).\\ \vspace{0.5cm}

\noindent \textbf{Acknowledgements.} The first named author was supported by the ERC Starting Grant Quasiperiodic and by the Balzan Research Project of J. Palis. The second named author was supported by the \'Ecole normale sup\'erieure, Paris, and by the Alfr\'ed R\'enyi Institute of Mathematics, and his work was part of project ANR-12-BS01-0011 CAESAR; he is also grateful to Gonzalo Fiz-Pontiveros and Bryna Kra for useful conversations.\\

\end{document}